\begin{document}
%-------------------------------------------------------------------------------
\title[Picard rank of toric Fanos with minimal curve constraints]{Bounds on the Picard rank of toric Fano varieties with minimal curve constraints}
\author{R.~Beheshti}
\address{Department of Mathematics \& Statistics \\ Washington University in St.~Louis \\ St.~Louis \\ MO \\ 63130 \\ USA}
\email{beheshti@wustl.edu}
\author{B.~Wormleighton}
\address{Department of Mathematics \& Statistics \\ Washington University in St.~Louis \\ St.~Louis \\ MO \\ 63130 \\ USA}
\email{benw@wustl.edu}
\maketitle
%-------------------------------------------------------------------------------

\begin{abstract} We study the Picard rank of smooth toric Fano varieties constrained to possess families of minimal rational curves of given degree. We discuss variants of a conjecture of Chen--Fu--Hwang and prove a version of their statement that recovers the original conjecture in sufficiently high dimension. We also prove new cases of the original conjecture for high degrees in all dimensions. Our main tools come from toric Mori theory and the combinatorics of Fano polytopes. 
\end{abstract}

\section{Introduction}

\subsection{Context}

Controlling numerical invariants of important classes of varieties is valuable for classification purposes and for understanding their geography. In this paper we study toric Fano varieties and seek to bound their Picard rank. One of the key results in this area is a theorem of Casagrande.

\begin{thm*}[\!{\cite[Thm.~1]{cas_num_06}}] \label{thm:casa} Let $X$ be a $\Q$-factorial Gorenstein toric Fano variety of dimension $n$. Then
$$\rho(X)\leq 2n.$$
\end{thm*}

As is often the case in toric geometry, the proof occurs on a combinatorial avatar of $X$, its \emph{spanning} or \emph{Fano polytope} $P$. In this setting the Picard rank of $X$ is given by the number of vertices of $P$ minus the dimension of $X$ (or of $P$).

It is natural to investigate how such an upper bound behaves in light of geometric constraints imposed on the class of Fano varieties. Recall that an irreducible component $\mathcal C$ of the space of rational curves in a Fano variety $X$ is called a \emph{minimal component} if the union of curves parametrized by $\mathcal C$ is dense in $X$ and for a general $x$ in $X$, the family $\mathcal C_x$ of curves parametrized by $\mathcal C$ through $x$ is complete. A minimal component $\mathcal C$ is said to have degree $k$ if $-K_X \cdot C=k$ for a curve $C$ parametrizd by $\mathcal C$. Work of Chen--Fu--Hwang \cite{cfh_min_14} studies the Picard rank of $X$ where $X$ is a smooth toric Fano variety of dimension $n$ with a minimal component of degree $k$.  They conjecture \cite[Thm.~7]{cfh_min_14} that in this case
\begin{equation} \tag{$\ast$} \label{eqn:conj}
\rho(X)\cdot (k-1)\leq\frac{n(n+1)}{2}.
\end{equation}
It is proved in the same paper for all $k$ in dimension at most $4$.

\subsection{Main results and conjectures} \label{sec:main}

We further study the restrictions on the Picard rank imposed by considering toric Fano varieties with minimal curve constraints. It will be convenient for us to define the \emph{codegree} $\on{codeg}(\mathcal{C})$ of a minimal component $\mathcal{C}$ to be $\on{dim}(X)+1-\on{deg}(\mathcal{C})$. For instance, if $X$ has a codegree $0$ (i.e.~degree $n+1$) minimal component then $X=\pr^n$ \cite{cms_cha_02}. We formulate the following variants of Chen--Fu--Hwang's conjecture.

\begin{conjecture*}[Strong conjecture] \label{conj:strong}
Let $X$ be a smooth toric Fano variety of dimension $n$ with a minimal component $\mathcal{C}$ of degree $k$. Then
$$\rho(X)\leq 2n-2k+4=2\on{codeg}(\mathcal{C})+2$$
\end{conjecture*}

Note that Conj.~\ref{conj:strong} is typically, but not always, stronger than (\ref{eqn:conj}). This conjecture, like that of Chen--Fu--Hwang, seems difficult and so we formulate the following weaker version.

\begin{conjecture*}[Weak conjecture] \label{conj:weak}
Let $X$ be a smooth toric Fano variety with a minimal component $\mathcal{C}$ of \textbf{codegree} $q$. Then there exists a constant $A_q$ depending only on the codegree $q$ such that
$$\rho(X)\leq A_q$$
\end{conjecture*}

It is easy to see that Conj.~\ref{conj:weak} implies (\ref{eqn:conj}) for all Fano varieties of sufficiently large dimension and all minimal components of fixed codegree (hence their degree grows linearly with dimension).

We present some results towards these conjectures. We obtain our main results by performing a detailed `Batyrev-style' analysis of \emph{primitive collections} (see Def.~\ref{def:prim_coll}) on toric Fano varieties, along with new applications of results in toric Mori theory \cite{rei_dec_83,cas_tor_03}.

\begin{thm*}[Thm.~\ref{thm:weak_main}] \label{thm:weak}
Conjecture \ref{conj:weak} is true.
\end{thm*}

We can show more for toric Fano varieties with minimal components of low codegree. It follows from classification theorems \cite{cms_cha_02,obr_cla_07} that $A_0=1$ and $A_1=3$.

\begin{thm*}[Thm.~\ref{thm:codegree2}] We verify that $\rho(X)\leq 5$ for all smooth toric Fano varieties $X$ with a minimal component of codegree $2$.
\end{thm*}

\begin{cor*}[{Thm.~\ref{thm:weak_main} + Cor.~\ref{cor:codegree2}}] We can conclude:
\begin{enumerate}
    \item Conjecture \ref{conj:strong} holds for all smooth toric Fano varieties with a minimal component of codegree at most $2$.
    \item The conjecture of Chen--Fu--Hwang holds for all smooth toric Fano varieties with a minimal component of codegree at most $2$.
    \item The conjecture of Chen--Fu--Hwang hold for smooth toric Fano varieties of sufficiently high dimension for all minimal components of fixed codegree.
\end{enumerate}
\end{cor*}

\begin{remark*}
Some other cases of Conj.~\ref{conj:strong} -- and hence the conjecture of Chen--Fu--Hwang -- can be verified using toric fibration methods; for a reference on the theory of toric fibrations see \cite{dir_lin_14}. One can utilise classification results of Casagrande \cite[Thm.~2.4]{cas_tor_03} to reduce Conj.~\ref{conj:strong} for a toric Fano variety of dimension $n$ to the same bound for a toric Fano variety of lower dimension in some situations. For instance, if $X$ contains a toric divisor $D$ with $\rho_X-\rho_D=3$ then $X$ can be expressed as a fibration with base a toric Fano variety $B$ of dimension $\on{dim}(X)-2$. If Conj.~\ref{conj:strong} holds for $B$ then one can show that it also holds for $X$. In this way one can produce new examples of (albeit somewhat special) toric Fano varieties satisfying Conj.~\ref{conj:strong}, such as all toric Fano $5$- and $6$-folds with such a toric divisor. We are curious if a melange of the techniques in this paper with other (e.g.~toric fibration) methods can be used to access more cases, or even the general case, of Conj.~\ref{conj:strong}.
\end{remark*}

\begin{remark*}
Another interesting problem is to study Conj.~\ref{conj:strong} and the conjecture of Chen--Fu-Hwang for wider classes of toric Fano varieties where some singularities are permitted. Many bounds on geometric invariants for toric Fano varieties extend beyond the smooth case, such as Thm.~\ref{thm:casa}, and so we are interested to explore versions of the conjectures addressed in this paper for other classes, including Gorenstein toric Fano varieties and toric Fano orbifolds.
\end{remark*}

\subsection*{Acknowledgements} The authors are grateful to Al Kasprzyk, Andrea Petracci, and Jun-Muk Hwang for helpful conversations. We also especially thank Cinzia Casagrande for helping us identify and correct an error in an earlier draft. RB is partially supported by NSF grant DMS-2101935.

\section{Background}

\subsection{Fans, polytopes and toric varieties}

Toric varieties are described by various combinatorial gadgets, most notably \emph{fans} and \emph{polytopes}.

A fan $\Sigma$ in a finite-dimensional real vector space $V$ is a collection of cones satisfying certain regularity and compatibility conditions \cite[\S3.1]{cls_tor_11}. Given a fan $\Sigma$ one can construct a toric variety $X_\Sigma$. The combinatorics of $\Sigma$ dictate many of the aspects of the geometry of $X_\Sigma$; for example, there is a bijection between the set of rays (one-dimensional cones in $\Sigma$) and torus-invariant prime divisors on $X_\Sigma$. We denote the set consisting of the primitive elements along the rays of $\Sigma$ by $G(\Sigma)$. Given $v_1,\dots,v_n$ we denote the cone they generate (i.e.~their $\R_{\geq0}$-linear span) by $\langle v_1,\dots, v_n\rangle$.

A polytope $P$ in a real vector space is the convex hull of a finite set of points. Consider a set $\{v_1,\dots,v_r\}$ of vertices of $P$. We say that a polytope $P$ is \emph{simplicial} if every face of $P$ is a simplex. $P$ is \emph{smooth} if the vertices of each facet of $P$ form a $\Z$-basis of $N$; in particular, $P$ is simplicial. Note that if $P$ is a simplicial polytope then a face with $r$ vertices will be $(r-1)$-dimensional.

\begin{definition} We say that a polytope $P$ is \emph{Fano} if:
\begin{enumerate}
\item every vertex of $P$ is primitive,
\item the origin is the only interior lattice point of $P$.
\end{enumerate}
\end{definition}

To such a polytope we can associate a projective toric Fano variety $X_P$ as the (possibly singular) toric variety corresponding to the complete fan $\Sigma_P$ whose rays are generated by the vertices of $P$ and such that the cone $\langle v_1,\dots,v_n\rangle$ is in $\Sigma_P$ if and only if $\langle v_1,\dots,v_n\rangle$ is a face of $P$. This justifies the slight ambiguity of notation. Here Fano means that $-K_{X_P}$ is $\Q$-Cartier and ample. Conversely, to a toric Fano variety $X$ one can associate a polytope -- the polytope corresponding to the anticanonical divisor of $X$ \cite[\S4]{cls_tor_11} -- and the polar dual of this polytope is a Fano polytope $P$ such that $X\cong X_P$. We often describe $P$ as `the Fano polytope for $X$'.

Much is known about the combinatorics of Fano polytopes and their connections with mirror symmetry, classification of Fano varieties, and other parts of combinatorics and algebraic geometry \cite{nil_gor_05,kn_fan_13,acgk_mir_16,kw_qua_18,ht_fac_17,nil_com_06,acgk_min_12,ks_com_00}.

\subsection{Primitive collections and minimal components}

\begin{definition} \label{def:prim_coll} A \emph{primitive collection} $\mP=\{\rho_1,\dots,\rho_r\}$ for a toric variety $X_\Sigma$ is a set of rays in $\Sigma$ such that the rays in $\mP$ do not together span a cone in $\Sigma$, but such that the rays in every proper subset of $\mP$ do span a cone.
\end{definition}

Primitive collections, introduced in \cite{bat_cla_91}, have been powerfully used to study the nef and effective cones of toric varieties and to approach classification problems \cite{cv_pri_09,bat_cla_99,rei_dec_83}. We often conflate rays with their primitive generators and so will also describe a set of vectors as forming a primitive collection when their corresponding rays do. When $\Sigma$ arises from a polytope $P$ we will also describe vertices of $P$ as forming primitive collections.

In our story certain primitive collections produce families of minimal rational curves. We recall the setup from \cite{cfh_min_14}. Fix a uniruled, complete, smooth variety $X$. In our context this will always be a smooth projective toric variety, usually Fano. Let $\on{RatCurves}^n(X)$ be the normalised space of rational curves on $X$ \cite{kol_rat_13}. For an irreducible component $\mathcal{C}\subseteq\on{RatCurves}^n(X)$ consider its universal family $\psi\colon \mathcal{U}\to \mathcal{C}$, which also comes with a map $\mu\colon \mathcal{U}\to X$.

\begin{definition}
$\mathcal{C}$ is a \emph{dominating component} if $\mu$ is dominant. $\mathcal{C}$ is a \emph{minimal component} if in addition $\mu^{-1}(x)$ is complete for general $x\in X$.
\end{definition}

\begin{thm}[\!{\cite[Thm.~7]{cfh_min_14}}] \label{thm:min_cpt} Let $X_\Sigma$ be a smooth projective toric variety. Minimal components in $X_\Sigma$ correspond to primitive collections $\{\rho_1,\dots,\rho_k\}$ such that the corresponding primitive generators $v_1,\dots,v_k$ satisfy
$$v_1+\dots+v_k=0$$
\end{thm}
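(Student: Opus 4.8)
The plan is to establish the claimed correspondence in two directions, in each case translating the geometric condition defining a minimal component into the combinatorial data of a primitive relation. Throughout I would work with the numerical class $\beta\in N_1(X_\Sigma)$ of a family of curves, which on a smooth complete toric variety is determined by the intersection numbers $D_\rho\cdot\beta$ with the torus-invariant prime divisors $D_\rho$, together with the anticanonical degree formula $-K_{X_\Sigma}\cdot\beta=\sum_\rho D_\rho\cdot\beta$. The main toric input is the description of effective curve classes and their degenerations via Reid's toric Mori theory and Batyrev's primitive relations: each primitive collection $\mathcal{P}=\{v_1,\dots,v_r\}$ has an associated relation $v_1+\dots+v_r=a_1w_1+\dots+a_sw_s$, where $\langle w_1,\dots,w_s\rangle\in\Sigma$ and $a_j>0$, defining a class $r(\mathcal{P})$ of anticanonical degree $r-(a_1+\dots+a_s)$.

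First I would treat the forward direction: given a primitive collection $\{v_1,\dots,v_k\}$ with $v_1+\dots+v_k=0$, the associated relation has empty positive part, so $r(\mathcal{P})$ meets each $D_{v_i}$ once and all other boundary divisors trivially, giving $-K_{X_\Sigma}\cdot r(\mathcal{P})=k$. I would then exhibit an explicit covering family of rational curves in this class using homogeneous (Cox) coordinates, parametrizing maps $\pr^1\to X_\Sigma$ by assigning a general linear form to each coordinate $x_{v_i}$ and constants to the remaining coordinates. The hypothesis that every proper subset of $\mathcal{P}$ spans a cone guarantees that these maps avoid the irrelevant locus and that, under the torus action, the resulting curves sweep out a dense subset. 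A parameter count then yields $\dim\mathcal{C}_x=-K_{X_\Sigma}\cdot r(\mathcal{P})-2=k-2$ for general $x$, matching the expected dimension of a family of free curves of this degree.

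The reverse direction starts from a minimal component $\mathcal{C}$, whose general member is a free rational curve of some class $\beta$ with $D_\rho\cdot\beta\geq 0$ for all $\rho$ (the general member is not contained in the boundary). Using the torus action on the space of curves, I would degenerate a general member to a connected union of torus-invariant rational curves, whose class decomposes into wall relations; I would then argue that $\beta$ is in fact the class of a single primitive relation $r(\mathcal{P})$, and that the positive part of this relation must vanish, i.e.\ $\sum v_i=0$. The degree bookkeeping forces $|\mathcal{P}|=k$, closing the correspondence.

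The crux of the argument — and the step I expect to be the main obstacle — is the equivalence between completeness of $\mathcal{C}_x$ and the vanishing of the positive part of the primitive relation. In one direction, a nonempty positive part $a_1w_1+\dots+a_sw_s$ yields an effective decomposition of $\beta$ into classes of strictly smaller anticanonical degree, allowing a general member through $x$ to break into a reducible curve; the limiting reducible curves lie in the boundary of the family through $x$, contradicting the properness of $\mathcal{C}_x$. Conversely, when the positive part is empty the numerics $D_{v_i}\cdot\beta=1$ and $D_\rho\cdot\beta=0$ otherwise, combined with the fact that proper subsets of $\mathcal{P}$ span cones, obstruct any breaking into effective moving subclasses, so $\mathcal{C}_x$ is forced to be complete. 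Making this dichotomy precise — carefully tracking how the toric degeneration interacts with the fixed point $x$ and the excluded locus, and ruling out degenerations into components contained in the toric boundary — is the technical heart of the proof.
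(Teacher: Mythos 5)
The first thing to say is that the paper does not prove this statement: Theorem \ref{thm:min_cpt} is quoted from Chen--Fu--Hwang and used as an imported black box, so there is no in-paper argument to measure your proposal against. Judged on its own terms, your outline follows the route one would expect for this correspondence (a Cox-coordinate construction of the covering family in one direction; toric degeneration and primitive-relation bookkeeping in the other), and the numerics you record --- $D_{v_i}\cdot\beta=1$, all other intersection numbers zero, anticanonical degree $k$, $\dim\mathcal{C}_x=k-2$ --- are correct.

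However, as you yourself flag, the proposal remains a plan rather than a proof at exactly the point where the content lies. Both directions reduce to the equivalence between completeness of $\mathcal{C}_x$ for general $x$ and the vanishing of the positive part $a_1w_1+\dots+a_sw_s$ of the primitive relation, and neither implication is actually carried out. In the forward direction, the claim that ``the numerics obstruct any breaking into effective moving subclasses'' is not automatic: one must show that no connected reducible rational one-cycle of class $\beta$ passes through a general point, which requires analysing all effective decompositions $\beta=\beta_1+\beta_2$ and the loci swept out by curves in each summand, not merely counting degrees. In the reverse direction, the assertion that the class of a minimal component is the class of a \emph{single} primitive relation with empty positive part (rather than, say, a sum of wall relations that is not itself primitive) is stated without justification, and this is where the genuine work in Chen--Fu--Hwang's argument happens. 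To make this a complete proof you would need to supply these two steps; otherwise the honest course is to do what the authors do and cite \cite[Thm.~7]{cfh_min_14}.
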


The (anticanonical) \emph{degree} $\on{deg}(\mathcal{C})$ of a minimal component $\mathcal{C}$ is $-K_X\cdot C$ for a curve $C$ in the family $\mathcal{U}$. In the toric context this is equal to the number $k$ in Thm.~\ref{thm:min_cpt}. As stated in the introduction, we will make much use of the \emph{codegree}: $\on{codeg}(\mathcal{C}):=\on{dim}(X)+1-\on{deg}(\mathcal{C})$.

\subsection{Mori theory and extremal classes}

Following \cite{cas_con_03,rei_dec_83} we introduce the relevant aspects of toric Mori theory for toric varieties in our context. Fix a smooth toric variety $X$. The effective cones of divisors and curves on $X$ are polyhedral cones generated by the irreducible torus-invariant divisors and curves respectively (hence the corresponding nef cones are also polyhedral).

The rays of the effective cone of curves $\on{NE}(X)$ are called \emph{extremal rays} \cite[\S1]{cas_con_03}. This contrasts with non-toric Mori theory since we do not in addition assume that the pairing with $K_X$ is negative. Through a combinatorial interpretation of the intersection pairing between divisors and curves for toric varieties we can describe the space of curves up to numerical equivalence as relations between ray generators. Namely, for a smooth toric variety $X_\Sigma$ described by a fan $\Sigma$ in $\R^n$ we have
$$N_1(X_\Sigma)=\on{ker}(F)\quad\text{where}\quad F\colon\R^{G(\Sigma)}\to\R^n,(a_v)_{v\in G(\Sigma)}\mapsto\sum_{v\in G(\Sigma)}a_vv$$
In other words, $N_1(x_\Sigma)$ is identified with the set of relations between elements of $G(\Sigma)$.

We will often use the basic fact that a relation
\begin{equation} \tag{$\diamondsuit$} \label{eqn:relation_eg}
    a_1x_1+\dots+a_rx_r-b_1y_1-\dots-b_sy_s=0
\end{equation}
for $X_\Sigma$ with $a_i,b_j>0$ defines an effective class if $\la y_1,\dots,y_s\ra$ is a cone in $\Sigma$ \cite[Lem.~1.4]{cas_con_03}. 

We will follow the standard convention of writing relations either as in (\ref{eqn:relation_eg}) or as
$$a_1x_1+\dots+a_rx_r=b_1y_1+\dots+b_sy_s$$
with all coefficients implicitly assumed to be positive, thus interpreting this relation as `left hand side minus right hand side equals zero'.

For each primitive collection $\mP=\{v_1,\dots,v_r\}$ we obtain a relation (and hence a curve class) of the form
$$v_1+\dots+v_r-a_1y_1-\dots-a_sy_s=0$$
where $y_1,\dots,y_s$ are the generators of the minimal cone in $\Sigma$ containing $v_1+\dots+v_r$, and $a_1,\dots,a_s>0$. Relations arising in this way are called \emph{primitive relations}, they define effective curve classes, and together they generate $\on{NE}(X_\Sigma)$ as a real cone \cite{bat_cla_91}.

We call the unique primitive integral class along an extremal ray an \emph{extremal class}. We will refer often to the following influential result of Reid describing the combinatorics of fans (or polytopes) `near to' extremal classes.

\begin{thm}[\!{\cite[Thm.~2.4]{rei_dec_83}}] \label{thm:reid}
Let $\gamma$ be an extremal class for a smooth toric variety $X_\Sigma$. Then there is a primitive collection $\mP$ such that the corresponding primitive relation is identified with $\gamma$. Write this relation as
$$v_1+\dots+v_r-a_1y_1-\dots-a_sy_s=0$$
For each $z_1,\dots,z_t\in G(\Sigma)$ such that $z_i\not=x_p$ and $z_i\not=y_q$ for all $i,p,q$ and such that $\langle y_1,\dots,y_s,z_1,\dots,z_t\rangle$ is a cone in $\Sigma$, we have that
$$\langle v_1,\dots,\widehat{v_i},\dots,v_r,y_1,\dots,y_s,z_1,\dots,z_t\rangle$$
is a cone in $\Sigma$ for each $i=1,\dots,r$.
\end{thm}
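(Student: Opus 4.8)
The plan is to translate the extremality of $\gamma$ into a statement about a supporting piecewise-linear function, and then read off the combinatorics of the nearby cones. First I would choose a nef divisor $D$ supporting the extremal ray $R=\R_{\geq0}\gamma$, so that $D\cdot\gamma=0$ while $D\cdot\gamma'>0$ for every other extremal class $\gamma'$; such a $D$ exists since $X_\Sigma$ is smooth and projective and $\on{NE}(X_\Sigma)$ is rational polyhedral. Passing to the associated support function $\varphi_D\colon\R^n\to\R$, which is linear on each cone of $\Sigma$ and convex because $D$ is nef, the condition $D\cdot\gamma=0$ together with convexity forces $\varphi_D$ to be given by a single linear functional $m$ on all of $v_1,\dots,v_r,y_1,\dots,y_s$ simultaneously: writing $\varphi_D|_{\langle y_1,\dots,y_s\rangle}=\langle m,\cdot\rangle$ and using the primitive relation, the quantity $\sum_i\big(\varphi_D(v_i)-\langle m,v_i\rangle\big)$ both vanishes (it equals $\pm D\cdot\gamma$) and is a sum of terms of a single sign (by convexity of $\varphi_D$), so each term is zero. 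Thus all the $v_i$ and $y_j$ lie in the subfan $F\subseteq\Sigma$ on which $\varphi_D$ agrees with $\langle m,\cdot\rangle$, which is exactly the locus contracted by the morphism $\phi_R\colon X_\Sigma\to Y$ determined by a multiple of $D$. This first step is where I convert the statement that $\gamma$ is extremal into usable combinatorial rigidity.

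Next I would simplify by passing to the star of $\tau=\langle y_1,\dots,y_s\rangle$, i.e.\ to the complete smooth fan of the orbit closure $V(\tau)$ in $N/N_\tau$. Since $y_1,\dots,y_s$ generate $N_\tau$, the primitive relation descends to $\bar v_1+\dots+\bar v_r=0$, which is precisely the ``minimal component'' shape of Thm.~\ref{thm:min_cpt}; the cone $\langle y_1,\dots,y_s,z_1,\dots,z_t\rangle$ descends to $\langle\bar z_1,\dots,\bar z_t\rangle$, and the desired cone descends to $\langle\bar v_1,\dots,\widehat{\bar v_i},\dots,\bar v_r,\bar z_1,\dots,\bar z_t\rangle$. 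This reduces the whole statement to the case $s=0$, i.e.\ $v_1+\dots+v_r=0$, at the cost of first establishing the instance with no $z$'s ($t=0$), namely $\langle v_1,\dots,\widehat{v_i},\dots,v_r,y_1,\dots,y_s\rangle\in\Sigma$, which simultaneously guarantees that the $\bar v_i$ are genuine rays of the star. I would produce cones by a perturbation argument: taking a point $u$ in the relative interior of $\langle y_1,\dots,y_s,z_1,\dots,z_t\rangle$ and using the relation to rewrite $u+\epsilon\sum_{l\neq i}v_l=\big(u+\epsilon\sum_j a_jy_j\big)-\epsilon v_i$, for small $\epsilon>0$ this is a relative-interior point of $\langle y_1,\dots,y_s,z_1,\dots,z_t\rangle$ pushed slightly in the $-v_i$ direction. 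By completeness of $\Sigma$ it lands in some maximal cone $\sigma^+$, and by the standard fact that perturbing a relative-interior point of a face lands in a cone having that face among its faces, $\sigma^+$ contains $\langle y_1,\dots,y_s,z_1,\dots,z_t\rangle$ as a face.

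It then remains to identify $\sigma^+$ with the claimed cone, and this is the step I expect to be the main obstacle. The tools are: (i) smoothness, so every cone of $\Sigma$ is unimodular and its rays are linearly independent; (ii) the support function, since on $\sigma^+$ we have $\varphi_D=\langle m_{\sigma^+},\cdot\rangle$ with $m_{\sigma^+}$ agreeing with $m$ on $\langle y_1,\dots,y_s\rangle$; and (iii) the fact that the only linear dependence among $v_1,\dots,v_r,y_1,\dots,y_s$ is the single circuit given by the primitive relation, a consequence of the unimodularity of $\langle v_1,\dots,\widehat{v_i},\dots,v_r,y_1,\dots,y_s\rangle$. Comparing $\varphi_D\big(u+\epsilon\sum_{l\neq i}v_l\big)$ computed inside $\sigma^+$ with the convexity bound from the first step forces the generators of $\sigma^+$ lying in $F$ to be exactly $v_1,\dots,\widehat{v_i},\dots,v_r$ (via the uniqueness of the circuit), while those outside $F$ must be exactly the $z_l$. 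The genuinely delicate point --- and where extremality, rather than mere effectivity of $\gamma$, is indispensable --- is excluding stray rays: I must rule out that $\sigma^+$ uses some ray outside $\{v_i\}\cup\{y_j\}\cup\{z_l\}$, or that the $-v_i$ perturbation slips into a neighbouring cone of $F$. Extremality guarantees that $\phi_R$ merges precisely the cones of $F$ into the single non-simplicial cone $\langle v_1,\dots,v_r,y_1,\dots,y_s\rangle$ whose unique circuit is the primitive relation, so that $\Sigma$ refines $\phi_R\big(\langle v_1,\dots,v_r,y_1,\dots,y_s,z_1,\dots,z_t\rangle\big)$ exactly by the circuit subdivision whose $r$ maximal pieces are the cones $\langle v_1,\dots,\widehat{v_i},\dots,v_r,y_1,\dots,y_s,z_1,\dots,z_t\rangle$. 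Establishing this clean ``one cone upstairs refines into $r$ pieces'' picture, uniformly in the compatible data $z_1,\dots,z_t$, is the crux of the argument.
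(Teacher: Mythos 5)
The paper does not actually prove this statement: Thm.~\ref{thm:reid} is quoted verbatim from Reid \cite[Thm.~2.4]{rei_dec_83} and used as a black box, so there is no internal proof to compare against. Judged on its own terms, your strategy (a nef divisor $D$ supporting the extremal ray, the convexity argument showing $\varphi_D(v_i)=\langle m,v_i\rangle$ for every $i$, the reduction to the star of $\langle y_1,\dots,y_s\rangle$, and the production of a cone $\sigma^+\in\Sigma$ having $\langle y_1,\dots,y_s,z_1,\dots,z_t\rangle$ as a face and containing $u-\epsilon v_i$) is sound and is essentially the modern repackaging of Reid's wall analysis; those steps are correct as written.

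The problem is that the argument stops exactly where the theorem begins. To conclude you must show that the minimal cone $\sigma^+$ containing $u-\epsilon v_i$ has ray set precisely $\{v_1,\dots,\widehat{v_i},\dots,v_r,y_1,\dots,y_s,z_1,\dots,z_t\}$, and your justification for this is the assertion that ``extremality guarantees that $\phi_R$ merges precisely the cones of $F$ into the single non-simplicial cone $\langle v_1,\dots,v_r,y_1,\dots,y_s\rangle$ whose unique circuit is the primitive relation, so that $\Sigma$ refines it by the circuit subdivision.'' That assertion \emph{is} Reid's theorem (it is equivalent to the collection of cone memberships you are trying to prove), so invoking it is circular. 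Nothing in the setup so far rules out that $\sigma^+$ uses a ray $w\notin\{v_i\}\cup\{y_j\}\cup\{z_l\}$ with $\varphi_D(w)=\langle m,w\rangle$: the locus where $\varphi_D$ is linear can a priori contain many rays besides the $v_i$, and the uniqueness-of-circuit argument via unimodularity only applies once you already know which rays span $\sigma^+$. Closing this gap requires a genuine induction --- in Reid's original proof, one starts from a single wall whose curve class lies in $R$ and rotates around it, using extremality at each wall crossing to show the adjacent wall's class decomposes effectively and hence stays in $R$, which is what forces the neighbouring cone to use $v_l$ rather than a stray ray. Until that inductive step (or an equivalent convexity argument pinning down $\sigma^+$) is supplied, the proof is a correct reduction of the theorem to itself.
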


Thus extremal classes produce many cones nearby to $v_1,\dots,v_r$. We will also draw on the following result of Casagrande.

\begin{cor}[\!{\cite[Cor.~4.4]{cas_con_03}}]
Let $X_\Sigma$ be a smooth toric Fano variety and let $\mP$ be a primitive collection for $\Sigma$. If the curve class corresponding to $\mP$ has anticanonical degreee $1$ then it is extremal.
\end{cor}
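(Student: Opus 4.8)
The plan is to prove directly that the ray $\R_{\ge 0}\gamma$ is an edge of $\on{NE}(X_\Sigma)$, where $\gamma$ is the primitive relation
$$v_1+\dots+v_r-a_1y_1-\dots-a_sy_s=0$$
attached to the primitive collection $\mP=\{v_1,\dots,v_r\}$. Since the primitive relations generate $\on{NE}(X_\Sigma)$ as a cone, a generator spans an extremal ray precisely when, in every way of writing it as a non-negative combination of primitive relations, only generators lying on its own ray occur. So it suffices to take an arbitrary expression $\gamma=\sum_k c_k\gamma_k$ with $c_k>0$, where $\gamma_k$ is the primitive relation of a primitive collection $\mP_k$, and to show each $\gamma_k$ is a positive multiple of $\gamma$; in fact I will show $\mP_k=\mP$ for every such $k$. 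The two inputs I will use are the combinatorial intersection pairing ($-K_X=\sum_v D_v$ and $D_v\cdot\delta$ equals the coefficient of $v$ in the relation defining $\delta$) and the defining signature of a primitive relation: it has coefficient $+1$ on exactly the rays of its primitive collection and non-positive coefficient on every other ray.

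The first step is a degree bound. Because $X_\Sigma$ is smooth and Fano, $-K_X$ is an ample Cartier divisor, so $-K_X\cdot\delta$ is a positive integer for every nonzero integral effective class $\delta$; applied to each integral primitive relation this gives $\deg(\gamma_k)=-K_X\cdot\gamma_k\ge 1$. Pairing $\gamma=\sum_k c_k\gamma_k$ with $-K_X$ and using $\deg(\gamma)=1$ yields
$$1=\sum_k c_k\,\deg(\gamma_k)\ge\sum_k c_k,$$
so $\sum_k c_k\le 1$.

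The heart of the argument is to read off the coefficient at each $v_i\in\mP$. Since $\gamma$ has coefficient $+1$ at $v_i$ we have $D_{v_i}\cdot\gamma=1$, while for each $k$ the value $D_{v_i}\cdot\gamma_k$ is at most $1$, with equality exactly when $v_i\in\mP_k$ and a non-positive value otherwise. Pairing $\gamma=\sum_k c_k\gamma_k$ with $D_{v_i}$ and combining with the degree bound produces the squeeze
$$1=\sum_k c_k\,(D_{v_i}\cdot\gamma_k)\le\sum_{k:\,v_i\in\mP_k}c_k\le\sum_k c_k\le 1,$$
forcing equality throughout. In particular $\sum_{k:\,v_i\in\mP_k}c_k=\sum_k c_k$, so every $k$ with $c_k>0$ has $v_i\in\mP_k$. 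As this holds for all $i=1,\dots,r$, each contributing $\mP_k$ contains all of $\mP$.

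To finish I invoke the definition of a primitive collection. If $\mP\subsetneq\mP_k$ were proper, then $\mP$, being a proper subset of the primitive collection $\mP_k$, would span a cone of $\Sigma$; but $\mP$ is itself a primitive collection, so it does not span a cone — a contradiction. Hence $\mP_k=\mP$ and $\gamma_k=\gamma$ for every contributing $k$, which shows that $\gamma$ spans an extremal ray. The step I expect to be most delicate is the coefficient bookkeeping in the squeeze: it rests on the fact that the only rays occurring with positive coefficient in a primitive relation are those of its primitive collection, each with coefficient exactly $+1$, which in turn uses that a primitive collection is disjoint from the generators of the minimal cone containing its sum. It is also worth stressing that the Fano hypothesis is essential and not merely convenient: ampleness of $-K_X$ is exactly what forces $\deg(\gamma_k)\ge 1$, hence $\sum_k c_k\le 1$, and the conclusion genuinely fails without it. (Notably, this argument needs only the generation of $\on{NE}(X_\Sigma)$ by primitive relations and does not invoke Thm.~\ref{thm:reid}.)
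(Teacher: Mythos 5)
Your argument is correct. Note first that the paper contains no internal proof of this statement: it is imported verbatim from Casagrande \cite[Cor.~4.4]{cas_con_03}, where it is obtained from her analysis of contractible classes on toric varieties. Your proof is a different, self-contained route: it uses only Batyrev's generation of $\on{NE}(X_\Sigma)$ by primitive relations and the combinatorial intersection pairing, and, as you observe, it never invokes Thm.~\ref{thm:reid}. The real content is the squeeze at each $D_{v_i}$ --- ampleness of $-K_X$ alone only yields $\sum_k c_k\le 1$, which by itself would not prevent a degree-one class from lying in the interior of a two-dimensional face --- and your bookkeeping there is sound: the coefficient of $v_i$ in any primitive relation $\gamma_k$ is at most $1$ when $v_i\in\mP_k$ and at most $0$ otherwise, which is all the inequality chain needs, and the final step ($\mP\subseteq\mP_k$ forces $\mP=\mP_k$ because a proper subset of a primitive collection spans a cone while $\mP$ does not) is exactly right.

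The one input you use without proof, and correctly flag as the delicate point, is that $D_{v_i}\cdot\gamma=1$, i.e.\ that no $v_i$ occurs among the generators $y_j$ of the minimal cone $\sigma(\mP)$ containing $v_1+\dots+v_r$. This is a genuine fact needing justification, but it holds for every primitive collection on a simplicial (in particular smooth) fan: if $v_1=y_1$, then $v_2+\dots+v_r=(a_1-1)y_1+\sum_{j\ge2}a_jy_j$ lies in the relative interior of $\langle v_2,\dots,v_r\rangle$ and also in the relative interior of a face of $\sigma(\mP)$; since relative interiors of distinct cones of a fan are disjoint, these two cones coincide, and then either $a_1=1$ and $\mP$ spans the cone $\sigma(\mP)$, or $a_1\ge2$ and $v_1\in\{v_2,\dots,v_r\}$ --- a contradiction either way. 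With that standard lemma supplied (it appears in the literature on primitive collections, e.g.\ Cox--von Renesse), your proof is complete and arguably more elementary than the cited one.
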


\section{Proof of weak conjecture}

We begin the proof of Thm.~\ref{thm:weak} with the following result.

\begin{prop} \label{prop:bound_pc}
For any integer $m \geq 1$ there is a constant $C_m$ depending only on $m$ with the property: if $X=X_\Sigma$ is a smooth toric Fano variety and $v_1, \dots, v_m\in G(\Sigma)$ form a cone in $\Sigma$, then the number of primitive collections of the form $v_1, \dots, v_m, w$ is bounded by $C_m$. 
\end{prop}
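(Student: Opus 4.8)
The plan is to fix the primitive collection $\{v_1,\dots,v_m,w\}$ and read off everything from its primitive relation. Write this relation as
$$v_1+\dots+v_m+w=a_1y_1+\dots+a_sy_s,$$
where $\langle y_1,\dots,y_s\rangle$ is the minimal cone of $\Sigma$ containing the left-hand side. Since $X$ is smooth this cone is smooth, so $y_1,\dots,y_s$ form part of a $\Z$-basis and the coefficients $a_j$ are positive integers. The relation defines an effective curve class of anticanonical degree $(m+1)-\sum_j a_j$, and because $-K_X$ is ample this degree is strictly positive. Hence $\sum_j a_j\leq m$, which already forces $s\leq m$ and $a_j\leq m$ for all $j$: there are only finitely many possible coefficient patterns $(s;a_1,\dots,a_s)$, bounded in terms of $m$ alone, and the target cone $\gamma=\langle y_1,\dots,y_s\rangle$ has at most $m$ rays.

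First I would treat the extremal case $\sum_j a_j=m$ (anticanonical degree $1$), where by \cite[Cor.~4.4]{cas_con_03} the class is extremal and Reid's Thm.~\ref{thm:reid} applies. Taking the empty collection of $z$'s and dropping $w$ from the primitive collection, Reid's theorem shows that $\langle v_1,\dots,v_m,y_1,\dots,y_s\rangle$ is a (smooth) cone of $\Sigma$; in particular the $y_j$ lie in the star of $\sigma=\langle v_1,\dots,v_m\rangle$. Choosing a $\Z$-basis adapted to this smooth cone, the relation shows that $w=a_1y_1+\dots+a_sy_s-v_1-\dots-v_m$ is \emph{completely determined} by the target cone $\gamma$ together with the coefficient pattern. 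Thus, up to the bounded number of coefficient patterns, counting admissible $w$ reduces to counting the target cones $\gamma$ (equivalently their images in the star of $\sigma$) that actually occur. The remaining degrees $\sum_j a_j<m$ I would handle in the same spirit, using the weaker confinement available for non-extremal relations to again reduce to bounding the set of occurring target cones.

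The crux -- and the step where I expect essentially all the difficulty to lie -- is to bound the number of occurring target cones $\gamma$ \emph{uniformly in $n$ and in $X$}, since a smooth complete fan of dimension $n-m$ carries unboundedly many cones of dimension $\leq m$. The extra leverage is that $w=\sum_j a_jy_j-\sum_i v_i$ must itself be a primitive generator in $G(\Sigma)$ and that $\{v_1,\dots,v_m,w\}$ is a minimal non-face. My plan is to exploit the full strength of Reid's matching: varying the auxiliary rays $z_1,\dots,z_t$ produces, for each occurring $\gamma$, an entire family of cones of $\Sigma$ through $\sigma$ indexed by the star of $\gamma$, and I would try to show that distinct target cones force sufficiently disjoint such families, then bound the number of families by combining smoothness with the Fano condition that the origin is the only interior lattice point of the Fano polytope. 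An alternative route is induction on $m$, passing to the star of the ray $\langle v_m\rangle$ to reduce to the case of $m-1$ fixed rays; the subtlety there is that the invariant divisor $D_{v_m}$ need not be Fano, so one must track only the degree positivity that survives and argue that it still yields the coefficient bound. Either way, obtaining a bound independent of the ambient dimension is the heart of the matter, whereas the coefficient bound and the reduction via Reid's theorem are routine by comparison.
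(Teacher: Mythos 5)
There is a genuine gap, and you have in fact located it yourself: everything up to the reduction ``count the occurring target cones $\gamma$'' is routine (the coefficient bound $\sum_j a_j\leq m$, $s\leq m$ from ampleness of $-K_X$, and the consequence of Thm.~\ref{thm:reid} in the degree-one case), but the step you flag as the crux --- bounding the number of occurring target cones uniformly in $n$ and $X$ --- is exactly where the content of the proposition lies, and you offer only a speculative strategy (``distinct target cones force sufficiently disjoint families'') with no actual argument. Your alternative route, induction on $m$ by passing to the divisor $D_{v_m}$, founders on the obstruction you yourself note: $D_{v_m}$ need not be Fano, and without the Fano condition the statement is false (a smooth complete fan in fixed codimension can have unboundedly many low-dimensional cones), so no ``surviving positivity'' can be expected to rescue it.

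The paper closes this gap by a different mechanism, which is worth internalizing: induction on $m$ \emph{within $X$ itself}, combined with a pigeonhole argument that converts an excess of primitive relations into a forbidden curve class. Concretely, fix among the relations $v_1+\dots+v_m+w=\sum_j b_ju_j$ (with $t<m$ target generators, say) one maximizing $\sum_j b_j$. By the inductive hypothesis applied to subsets of $\{u_1,\dots,u_t\}$, only boundedly many vertices $w'$ can form a primitive collection with some subset of the $u_j$; any further $w'$ must span a cone with all of $u_1,\dots,u_t$. For such a $w'$, subtracting the two primitive relations gives $\sum_j b'_ju'_j+w=\sum_j b_ju_j+w'$, whose right-hand side spans a cone in $\Sigma$, hence by \cite[Lem.~1.4]{cas_con_03} this is an effective class --- of anticanonical degree $\sum b'_j-\sum b_j\leq 0$ by maximality, contradicting ampleness of $-K_X$. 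The remaining case (target of size exactly $m$ with unit coefficients) requires a further round of the same idea plus Thm.~\ref{thm:reid} to manufacture a combination of ray generators with coefficient sum $\leq 2m$ representing a class of nonpositive degree. The moral is that the dimension-independence does not come from confining the target cones geometrically, but from the fact that any two ``independent'' primitive relations over the fixed cone $\langle v_1,\dots,v_m\rangle$ can be played off against each other to violate the Fano condition; without this (or an equivalent) device your outline cannot be completed.
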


The key part of the statement is that the constant $C_m$ is independent of dimension.

\begin{proof}
We prove the statement by induction on $m$. Casagrande proved $C_1 = 3$ \cite[Lem.~2.3]{cas_tor_03}. Suppose the statement holds for any integer smaller than $m$. We will show 
$$C_m \leq \sum_{i=1}^{m-1} \left(2\cdot{{m-1}\choose{i}}+ {{m}\choose{i}}\right) C_i$$
We first show that the number of primitive relations of the form $v_1+\dots+v_m+w = b_1u_1+\dots+b_tu_t$ such that $t <m$ is bounded by
$$N_{m-1}=\sum_{i=1}^{m-1} {{m-1}\choose{i}} C_i$$
Let
$v_1+\dots+v_m+w = b_1u_1+\dots+b_tu_t$ be such a relation with $\sum_{j=1}^t b_j$ maximal. Then, by our induction hypothesis, for any subset of size $1\leq l \leq m-1$ of $\{u_1, \dots, u_t\}$ there are at most $C_l$ primitive collections of length $l+1$ containing that subset. If there are more than $N_{m-1}$ such relations, there must be a relation $v_1+\dots+v_m+w' =b_1'u_1'+\dots +b'_s u'_s$ 
such that $w'$ does not make a primitive collection with any subset of $\{u_1, \dots, u_t\}$; that is, $w',u_1, \dots, u_t$ is a cone. On the other hand we have the relation
$$\sum_{j=1}^s b'_ju'_j+w=\sum_{j=1}^t b_ju_j+w'$$
Note that $w', u_1, \dots, u_t$ form a cone and so this relation corresponds to an effective curve class of anticanonical degree $\sum_{j=1}^s b'_j-\sum_{j=1}^t b_j \leq 0$, a contradiction. 

We next show the number of primitive relations of the form $v_1+\dots+v_m+w = u_1+\dots+u_{m}$ is bounded by $N_{m-1}+\sum_{i=1}^{m-1}{{m}\choose{i}} C_i$. We suppose otherwise and fix one such relation 
$$v_1+\dots+v_m+w = u_1+\dots+u_{m}.$$
By the induction hypothesis there are at most $\sum_{i=1}^{m-1}{{m}\choose{i}} C_i$ vertices $w'$ such that $w'$ and a proper subset of $u_1, \dots, u_m$ form a primitive collection. Thus there are more than $N_{m-1}$ relations of the form  
$v_1+\dots+v_m+w' = u'_1+\dots+u'_{m}$ such that $w'$ forms a cone with every proper subset of $\{u_1, \dots, u_m\}$. Since 
$$ w + \sum_{j=1}^m u'_j = w' + \sum_{j=1}^m u_j,$$
$u_1, \dots, u_m, w'$ cannot form a cone, so they form a primitive collection and we get more than $N_{m-1}$ primitive relations of the form  
$$u_1+\dots+u_m+w' = \sum_{j=1}^r c_j x_j$$
A similar argument to the above shows that there is one such relation with $r=m$ and $c_j=1$ for all $j$. Therefore this relation is extremal and so $u_1, \dots, u_m, x_1, \dots, x_m$ form a cone by Thm.~\ref{thm:reid}. We have 
$$ \sum_{j=1}^m u_j + \sum_{j=1}^m x_j  = \sum_{j=1}^m v_j + w + \sum_{j=1}^m u_j + w' = \sum_{j=1}^m u'_j + \sum_{j=1}^m u_j +w$$
Since $w, u'_1, \dots, u'_m$ cannot form a cone there is a subset of $\{u'_{i_1}, \dots, u'_{i_l}\}$ of the $u'_j$ which form a primitive collection with $w$ and so there is a primitive relation 
$$u'_{i_1}+\dots+u'_{i_l}+w = \sum_{j=1}^p d_j y_j$$ 
with $\sum_{j=1}^p d_j \leq m$. We can then write $\sum_{j=1}^m u_j + \sum_{j=1}^m x_j$ as a linear combination of ray generators with sum of coefficients $\leq 2m$. This contradicts the assumption that $X$ is Fano.
\end{proof}

\begin{thm} \label{thm:weak_main}
For any $k\in\Z_{\geq0}$ there is a constant $A_k$ such that for any smooth toric Fano variety of dimension $n$ with a primitive relation of the form
$$v_1+\dots+v_{n+1-k}=0$$
we have $\rho(X) \leq A_k$. 
\end{thm}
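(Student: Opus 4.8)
The hypothesis exhibits $\mP_0=\{v_1,\dots,v_m\}$ with $m=n+1-k$ as a primitive collection whose primitive relation is $v_1+\dots+v_m=0$. Since every proper subset of a primitive collection spans a cone, each of the $m$ subsets $\{v_1,\dots,\widehat{v_i},\dots,v_m\}$ spans an $(n-k)$-dimensional cone of $\Sigma$; because $v_1+\dots+v_m=0$, these are precisely the cones over the facets of the simplex $\Delta=\on{conv}(v_1,\dots,v_m)$, whose barycentre is the origin and whose affine span is an $(n-k)$-dimensional subspace $V_0$. Thus the cones of $\Sigma$ contained in $V_0$ comprise (at least) a copy of the fan of $\pr^{n-k}$. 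My first step is to recast the goal in terms of rays: for a smooth complete toric variety $\rho(X)=\#G(\Sigma)-n$, and exactly $m=n+1-k$ of the rays are the $v_i$, so $\rho(X)=\#\{\text{rays outside }\mP_0\}+1-k$. It therefore suffices to bound the number of rays not among $v_1,\dots,v_m$ by a constant depending only on $k$.

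To control those rays I would run Prop.~\ref{prop:bound_pc} as the main engine, reusing the Fano cancellation principle behind its proof: on a Fano variety every nonzero effective curve class has positive anticanonical degree, so two primitive relations that share all but a bounded part of their support cannot coexist, since subtracting them produces an effective class of nonpositive degree. The plan is to attach each ray $w$ outside $\mP_0$ to a primitive collection that $w$ forms with a subset of the $v_i$ together with the rays transverse to $V_0$, and to organise these attachments around a bounded family of anchoring cones. For an anchoring cone with $j$ generators, Prop.~\ref{prop:bound_pc} caps the number of primitive collections through it by the dimension-independent constant $C_j$; summing over boundedly many anchors would then bound $\#\{\text{rays outside }\mP_0\}$, and with it $\rho(X)$.

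The crux, and the main obstacle, is to make this anchoring dimension-independent. The obvious cones to anchor on are the facets of $\Delta$, but these have $n-k$ generators, for which $C_{n-k}$ grows with $n$ and carries no information. The real content is that all complexity is concentrated in the $k$ directions transverse to $V_0$ rather than in the large $\pr^{n-k}$ carried by $\mP_0$, so I would anchor instead on cones of size $O(k)$ meeting the transverse directions. Concretely, the complete subfan filling $V_0$ should present $X$ as a toric fibration $\varphi\colon X\to Y$ over a smooth toric Fano base $Y$ of dimension $k$, whence $\rho(Y)\le 2k$ by Casagrande's theorem (Thm.~\ref{thm:casa}); it then remains to bound the rays in the fibre direction and any rays inside $V_0$ that subdivide $\Delta$ using Prop.~\ref{prop:bound_pc} and Thm.~\ref{thm:reid}. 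Establishing that every ray outside $\mP_0$ is pinned to one of boundedly many $O(k)$-size anchors is the technical heart of the argument, and is exactly where the Fano cancellation trick and Reid's description of the cones near extremal classes must do the real work.
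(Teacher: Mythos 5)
Your opening reduction is correct, and you have rightly identified Prop.~\ref{prop:bound_pc} and the Fano cancellation principle (subtracting two primitive relations with overlapping support to produce an effective class of nonpositive anticanonical degree) as the engine. But the proposal stops at exactly the point you yourself flag as the technical heart, and the one concrete mechanism you offer to reach it does not work. The complete subfan of $\pr^{n-k}$ inside $V_0$ does \emph{not} in general present $X$ as a toric fibration over a smooth $k$-dimensional Fano base: for that one needs every cone of $\Sigma$ to be compatible with the projection $\R^n\to\R^n/V_0$ (the images of cones need not even form a fan, and even when they do the fibration need not be locally trivial). Already for the degree-six del Pezzo surface with the relation $x+(-x)=0$ one gets only a conic fibration over $\pr^1$ with reducible fibres, several rays lying over a single ray of the base, and $\rho(X)=4\neq\rho(\pr^1)+\rho(\pr^1)$. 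The paper itself confines fibration methods to a remark precisely because they apply only in special situations (e.g.\ $\rho_X-\rho_D=3$ for a toric divisor $D$). So ``anchor on the base of the fibration'' is unavailable, and no substitute construction of the bounded family of $O(k)$-size anchoring cones is supplied.

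For comparison, the paper manufactures the anchors as follows. Lemma~\ref{lem:improved}, applied recursively to intersections of toric divisors, completes $v_1,\dots,v_{n-k}$ to a maximal cone of $\Sigma$ by adjoining only $k$ further rays, organised into groups $\{y^i_1,\dots,y^i_{t_i+1}\}$ with $t_1+\dots+t_l=k$ and each $y^i_1+\dots+y^i_{t_i+1}$ congruent to $0$ modulo the span of all previous generators; consequently every $w\in G(\Sigma)$ satisfies $w+\alpha\equiv 0$ modulo $V_0$ for some nonnegative combination $\alpha$ of $k$ of these rays. Prop.~\ref{prop:mk_bound} then covers the nonnegative span of any $k$ rays by at most $M_k$ cones of $\Sigma$ of dimension at most $k2^k$, by induction on the anticanonical degree of the associated effective class using Thm.~\ref{thm:reid}; these covering cones are the dimension-independent anchors. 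Each $w$ outside $\{v_1,\dots,v_{n-k+1}\}$ then fails to form a cone with the anchor containing its $\alpha$, hence lies in a primitive collection with a bounded subset of its generators, and Prop.~\ref{prop:bound_pc} bounds the count. Without Lemma~\ref{lem:improved} and Prop.~\ref{prop:mk_bound} (or genuine substitutes for them) your argument does not close.
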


Note that this follows from classification results \cite{cms_cha_02,obr_cla_07} when $k\leq1$.

\begin{lemma} \label{lem:improved}
Let $x_1,\dots,x_r\in G(\Sigma)$ be such that $\la x_1,\dots,x_r\ra\in\Sigma$. Then there exist $t\geq 1$ and $y_1,\dots,y_{t+1}\in G(\Sigma)$ such that $
\la x_1,\dots,x_r,y_1,\dots,y_t\ra\in\Sigma$ and
$$y_1+\dots+y_{t+1}\equiv0\on{mod}{\on{span}(x_1,\dots,x_r)}$$
\end{lemma}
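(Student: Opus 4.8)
The plan is to descend to the orbit closure $V(\sigma)$ associated to the cone $\sigma=\langle x_1,\dots,x_r\rangle$, where the sought relation becomes a zero-sum primitive collection. Since $X$ is smooth the cone $\sigma$ is smooth, so $N_\sigma:=N\cap\on{span}(x_1,\dots,x_r)=\Z x_1\oplus\dots\oplus\Z x_r$ and the star (quotient) fan $\bar\Sigma=\Sigma/\sigma$ in $\bar N:=N/N_\sigma$ is a smooth complete fan with $X_{\bar\Sigma}=V(\sigma)$. Its rays are exactly the images $\bar y$ of the $y\in G(\Sigma)$ with $\langle\sigma,y\rangle\in\Sigma$, and each such $\bar y$ is primitive because $\langle\sigma,y\rangle$ is a smooth cone. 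Under this dictionary a set $\{\bar y_1,\dots,\bar y_t\}$ spans a cone in $\bar\Sigma$ precisely when $\langle x_1,\dots,x_r,y_1,\dots,y_t\rangle\in\Sigma$, and a relation $\bar y_1+\dots+\bar y_{t+1}=0$ in $\bar N$ lifts to $y_1+\dots+y_{t+1}\in N_\sigma\subseteq\on{span}(x_1,\dots,x_r)$. Hence it suffices to produce a primitive collection $\{\bar y_1,\dots,\bar y_{t+1}\}$ of $\bar\Sigma$ whose primitive relation is $\bar y_1+\dots+\bar y_{t+1}=0$: any proper subset of a primitive collection spans a cone, so $\{\bar y_1,\dots,\bar y_t\}$ supplies the required cone while the full sum supplies the required relation (with $t\geq1$, as a primitive collection has at least two elements).

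To produce such a collection I would feed $V(\sigma)$ into the theory of minimal components. As a closed subvariety of the projective variety $X$, $V(\sigma)$ is projective, and being toric it is rational, hence uniruled; a minimal dominating family of rational curves then provides a minimal component $\mathcal C'$ of $V(\sigma)$. Applying Thm~\ref{thm:min_cpt} to the smooth projective toric variety $V(\sigma)=X_{\bar\Sigma}$, this minimal component corresponds to a primitive collection $\{\bar y_1,\dots,\bar y_k\}$ of $\bar\Sigma$ with $\bar y_1+\dots+\bar y_k=0$. Setting $t=k-1$ and $y_{t+1}:=y_k$ then yields $\langle x_1,\dots,x_r,y_1,\dots,y_t\rangle\in\Sigma$ together with $y_1+\dots+y_{t+1}\equiv0\bmod\on{span}(x_1,\dots,x_r)$, as required.

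The crux is exactly the existence of this \emph{balanced} (unit-coefficient) relation in $\bar\Sigma$: completeness of $\bar\Sigma$ alone only places $0$ in the interior of the convex hull of the ray generators, which yields relations with uncontrolled positive coefficients, so the minimal-component input is what forces coefficients equal to $1$. I therefore expect the main work to be in justifying the existence of a minimal component, equivalently the combinatorial fact that every smooth projective toric variety carries a zero-sum primitive collection; as a sanity check this holds for all smooth complete toric surfaces, where it reduces to the primitive collection of $\pr^2$ or to an antipodal pair of rays, the latter being created and then preserved under toric blow-ups. Finally, the statement tacitly assumes $\sigma$ is non-maximal (i.e.\ $r<n$): if $\langle x_1,\dots,x_r\rangle$ is a maximal cone then $V(\sigma)$ is a point and no extending ray exists, so this hypothesis is implicit.
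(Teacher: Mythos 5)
Your proof is correct and takes essentially the same route as the paper's: both pass to the smooth projective toric variety $V(\sigma)=\bigcap_i D(x_i)$ with its quotient fan in $\R^n/\on{span}(x_1,\dots,x_r)$, extract a zero-sum primitive collection there from the existence of a minimal component (Thm.~\ref{thm:min_cpt}), and lift back. The only cosmetic difference is that the paper verifies that the lifted cone $\la x_1,\dots,x_r,y_1,\dots,y_t\ra$ lies in $\Sigma$ by an explicit relative-interior argument, where you invoke the standard star-fan dictionary.
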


\begin{proof}
Consider the fan for $D(x_1)$, naturally living in $\R^n/\on{span}(x_1)$, with ray generators the projections of ray generators in $G(\Sigma)$ neighbouring $x_1$ \cite[\S2]{nil_gor_05}. Repeating this construction gives the fan for $Z=\bigcap_{i=1}^rD(x_i)$ living in $\R^n/\on{span}(x_1,\dots,x_r)$ with ray generators the projections of ray generators in $G(\Sigma)$ sharing a cone with all $x_1,\dots,x_r$ simultaneously. This is a smooth toric variety and so there is a primitive collection $\ol{y}_1,\dots,\ol{y}_{t+1}$ with
$$\ol{y}_1+\dots+\ol{y}_{t+1}=\ol{0}$$
corresponding to a minimal component of degree $t+1$. Lift $\ol{y}_i$ to ray generators $y_i\in G(\Sigma)$ giving a cone $\la y_1,\dots,y_t\ra$ and the congruence
$$y_1+\dots+y_{t+1}\equiv0\on{mod}{\on{span}(x_1,\dots,x_r)}$$
It remains to show that $\sigma=\la x_1,\dots,x_r,y_1,\dots,y_t\ra\in\Sigma$. Assume not, then there is a ray $\rho=\R_{\geq0}\cdot v$ contained in the relative interior of $\sigma$; so $v=\sum_{i=1}^r\alpha_ix_i +\sum_{j=1}^t\beta_j y_j$ for $\alpha_i, \beta_j>0$. But the projection of $\sigma$ then contains the ray $\rho$ and so $\ol{y}_1,\dots,\ol{y}_t$ do not span a cone in $\Sigma_{Z}$, which is a contradiction.
\end{proof}

\begin{prop} \label{prop:mk_bound}
There is a constant $M_k$ depending only on $k$ with the following property: for any smooth toric Fano variety $X=X_\Sigma$ and any $y_1, \dots, y_k \in G(\Sigma)$, there are 
$t \leq M_k$ cones $\sigma_1, \dots, \sigma_{t} \in \Sigma$ of dimension at most $k2^k$ such that any nonnegative combination of the $y_1,\dots,y_k$ belong to at least one $\sigma_j$.
\end{prop}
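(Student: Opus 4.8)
The nonnegative combinations of $y_1,\dots,y_k$ are precisely the points of the cone $\langle y_1,\dots,y_k\rangle$, so the statement asks us to cover this cone by at most $M_k$ cones of $\Sigma$ of dimension at most $k2^k$. The plan is to induct on $k$, the essential inputs being that a smooth complete fan is simplicial and unimodular, and that the Fano condition forces every nonzero effective curve class to have positive anticanonical degree. At the outset I would reduce to the case where the relevant generators are linearly independent: a cone $\langle A\rangle$ on $|A|=k$ rays can be triangulated into simplicial subcones spanned by linearly independent subsets of $A$, using no auxiliary rays, and the number of pieces is bounded in terms of $k$ alone; so it suffices to cover each simplicial piece.

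The heart of the argument is a single \emph{replacement step}. Given a finite set $A\subseteq G(\Sigma)$ spanning a simplicial cone with $\langle A\rangle\notin\Sigma$, set $p=\sum_{a\in A}a$ and let $\tau=\langle u_1,\dots,u_s\rangle$ be the minimal cone of $\Sigma$ containing $p$, so that $p=\sum_l a_lu_l$ with $a_l\in\Z_{\ge1}$ by unimodularity. The relation $\sum_{a\in A}a=\sum_l a_lu_l$ defines an effective curve class, since its negative part $\{u_1,\dots,u_s\}$ spans the cone $\tau$; being nonzero, the Fano hypothesis makes its anticanonical degree $|A|-\sum_l a_l$ positive, whence $s\le\sum_l a_l\le |A|-1$. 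Since $p$ lies in the interior of $\langle A\rangle$, star-subdivision gives $\langle A\rangle=\bigcup_{a\in A}\langle A\setminus\{a\},p\rangle$, and because $p\in\tau$ each piece satisfies $\langle A\setminus\{a\},p\rangle\subseteq\langle A\setminus\{a\},u_1,\dots,u_s\rangle$. Thus $\langle A\rangle$ is covered by $|A|$ cones, each spanned by at most $(|A|-1)+s\le 2(|A|-1)$ rays.

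The cleanest case is when the driving relation is extremal: then Thm.~\ref{thm:reid}, applied with no auxiliary rays $z$, says directly that each $\langle A\setminus\{a\},u_1,\dots,u_s\rangle$ is already a cone of $\Sigma$, so a single replacement covers $\langle A\rangle$ by $|A|$ honest cones of dimension at most $2(|A|-1)$. I would therefore first try to run the replacement along an extremal ray of $\on{NE}(X)$ meeting $\langle y_1,\dots,y_k\rangle$ (for degree-$1$ relations this is automatic by Casagrande's result that such relations are extremal), and use Lem.~\ref{lem:improved} to manufacture the nearby cones needed to stay inside $\Sigma$.

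The main obstacle is the non-extremal case, where the pieces $\langle A\setminus\{a\},u_1,\dots,u_s\rangle$ need not lie in $\Sigma$ and must be covered by recursing the replacement step, with two quantities to control \emph{uniformly in} $n$. For the dimension: each replacement at most doubles the number of generators (from $|A|$ to $\le 2(|A|-1)<2|A|$), so a recursion of depth $d$ produces cones on fewer than $2^d k$ rays; the target bound $k2^k$ is exactly the statement that the recursion terminates within $k$ steps. Establishing this depth bound is the crux, and I expect it to come from the inequality $\sum_l a_l\le |A|-1$: the positive integer $\sum_l a_l$ should serve as a strictly decreasing measure of progress toward a genuine cone of $\Sigma$, starting at most at $k-1$. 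For the count: at each stage the branching is bounded by the number of facets times the number of admissible replacement relations, and the latter is bounded independently of $n$ by Prop.~\ref{prop:bound_pc}; multiplying the per-stage branching over the bounded depth yields a constant $M_k$ depending only on $k$. Assembling the depth bound, the dimension bound, and the branching bound then completes the proof, with the non-extremal termination measure being the step I expect to require the most care.
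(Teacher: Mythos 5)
Your overall architecture matches the paper's: the star-subdivision replacement step $\langle A\rangle=\bigcup_{a\in A}\langle A\setminus\{a\},p\rangle\subseteq\bigcup_{a\in A}\langle A\setminus\{a\},u_1,\dots,u_s\rangle$, the use of Reid's theorem (via extremality of degree-one classes) for the base case, the observation that each step at most doubles the number of generators so that a depth-$k$ recursion yields the dimension bound $k2^k$, and the correct identification of the termination argument as the crux. But that crux is exactly where your proposal has a genuine gap, and your guess at the terminating monovariant is wrong. You propose that $\sum_l a_l$ (the coefficient sum on the right-hand side) strictly decreases; in fact it strictly \emph{increases}. Writing the relation at the next stage as $y_1+\dots+\widehat{y_i}+\dots+y_k+z_1+\dots+z_m=\sum_j b_jw_j$ and comparing with $y_1+\dots+y_k=\sum_t a_tz_t$ gives $\sum_t(a_t+1)z_t=y_i+\sum_jb_jw_j$; since $\langle z_1,\dots,z_m\rangle\in\Sigma$, the relation read in the direction $y_i+\sum_jb_jw_j-\sum_t(a_t+1)z_t=0$ is effective, and the Fano condition forces $1+\sum_jb_j-\sum_t(a_t+1)>0$, i.e.\ $\sum_jb_j\geq\sum_ta_t+m$. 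What strictly decreases is therefore the \emph{anticanonical degree} $r=|A|-\sum_la_l$: the new relation has degree $k+m-1-\sum_jb_j\leq k-1-\sum_ta_t=r-1$. Since $1\leq r\leq k-1$ at the outset, the recursion terminates within $k-1$ steps, which is what delivers both the depth bound and the dimension bound $(k+m-1)2^{r'}\leq k2^k$. Without this ``opposite relation'' argument your induction has no proof of termination, and the quantity you name would, if anything, certify the opposite.

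A secondary point: you do not need Prop.~\ref{prop:bound_pc} to control the branching. At each stage the sum of the current generators lies in a \emph{unique} minimal cone of $\Sigma$, so there is exactly one replacement relation per omitted generator and the branching factor is just the current number of generators, giving $M_{k,r}\leq k\cdot\max M_{k+m-1,r'}$ directly; it is unclear that Prop.~\ref{prop:bound_pc}, which counts primitive collections extending a fixed cone by one ray, bounds the set of ``admissible replacement relations'' you have in mind. Likewise the preliminary triangulation into linearly independent subsets is unnecessary: for $q=\sum_a\lambda_aa$ with $\lambda_a\geq0$, taking $a_0$ with $\lambda_{a_0}$ minimal exhibits $q\in\langle A\setminus\{a_0\},p\rangle$, so the star-subdivision covering holds for arbitrary generating sets.
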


\begin{proof} Suppose $y_1, \dots, y_k \in G(\Sigma)$. If $\la y_1, \dots, y_k\ra \not\in \Sigma$ then $y_1+\dots+y_k$ is in a unique minimal cone $\la z_1, \dots, z_m\ra \in \Sigma$ and so we can write 
\begin{equation}\tag{$\spadesuit$} \label{eqn:eff_class_1}
y_1+\dots +y_k = \sum_{t=1}^m a_t z_t
\end{equation}
for positive integers $a_i$, and with $\sum a_i < k$ since the class defined by this relation is effective. Let $r=k-\sum_{t=1}^m a_t$ be the anticanonical degree of this effective class.

We first prove by induction on $r$ that there is a constant $M_{k,r}$ such that for any $k$ vertices $y_1, \dots, y_k\in G(\Sigma)$ defining an effective class of anticanonical degree $r$ as above, there are at most $M_{k,r}$ cones of dimension at most $k2^r$ in $\Sigma$ covering the cone generated by $y_1,\dots,y_k$ (i.e.~their positive linear span). We always assume $y_1 \dots, y_k$ do not generate a cone in $\Sigma$ since otherwise the statement is clear. 

If $r=1$ and $\{y_1, \dots, y_k\} \cap \{z_1, \dots, z_m\} = \emptyset$, then (\ref{eqn:eff_class_1}) defines an extremal class and so $y_1, \dots, y_k$ form a primitive collection. We can thus apply Thm.~\ref{thm:reid}: for each $i$ we find that  $\langle y_1, \dots, \hat{y_i}, \dots, y_k, z_1, \dots, z_m \rangle \in \Sigma$ and therefore the cone generated by $y_1, \dots, y_k$ can be covered by these $k$ cones of dimension at most $2k-1$ in $\Sigma$. If $r=1$ and $\{y_1, \dots, y_k\} \cap \{z_1, \dots, z_m\} \neq \emptyset$, then up to renumbering the $y_i$, (\ref{eqn:eff_class_1}) gives an extremal class of anticanonical degree 1 of the form  
$$y_1+\dots + y_{k'} = \sum_{t=1}^{m'} a'_tz'_t$$ 
with $k' < k$ and $\la z'_1, \dots, z'_m,y_{k'+1}\dots, y_k \ra \in \Sigma$. Then the cone generated by $y_1, \dots, y_k$ can be covered by the cones $\langle y_1, \dots, \hat{y_i}, \dots, y_k, z'_1, \dots, z'_{m'} \rangle$, $1\leq i \leq k'$, and 
$\la y_{k'+1}, \dots, y_k\ra$, and by  Thm.~\ref{thm:reid} these cones are in $\Sigma$. Hence $M_{k,1}\leq k$.

Suppose the statement holds for any $r' < r$. For each $1 \leq i \leq k$ we consider the set of vertices $\{y_1, \dots, \wh{y}_i, \dots, y_k, z_1, \dots, z_m\}$. If this set does not form a cone in $\Sigma$ then we can write as in (\ref{eqn:eff_class_1})
\begin{equation} \tag{$\heartsuit$} \label{eqn:eff_class_2}
 y_1+ \dots +\hat{y_i} + \dots +y_k +z_1+\dots +z_m = \sum_{j=1}^s b_j w_j
 \end{equation}
for some $w_j \in G(\Sigma)$ and positive integers $b_j$. From (\ref{eqn:eff_class_1}) and (\ref{eqn:eff_class_2}) we get 
$$\sum_{t=1}^m (a_t+1)z_t = y_i + \sum_{j=1}^s b_jw_j$$
Since $\langle z_1, \dots, z_s\rangle \in \Sigma$ we identify an effective class coming from the `opposite' relation
$$y_i + \sum_{j=1}^s b_jw_j=\sum_{t=1}^m (a_t+1)z_t$$
This class has anticanonical degree $1+\sum_{j=1}^sb_j-\sum_{t=1}^m(a_t+1)>0$ and so $\sum_{j=1}^s b_j \geq \sum_{t=1}^ma_t+m$. Applying this inequality to the calculation of the anticanonical degree of (\ref{eqn:eff_class_2}), we see that
$$k+m-1-\sum_{j=1}^s b_j \leq  k+m-1-\sum_{t=1}^m (a_t+1) = k-1-\sum_{t=1}^m a_t=r-1.$$
It follows that the anticanonical degree $r'$ of the cycle 
represented by equation (\ref{eqn:eff_class_2}) is smaller than the anticanonical degree of the cycle represented by relation (\ref{eqn:eff_class_1}). Therefore, by our induction hypothesis, the cone generated by  $\{y_1, \dots, \wh{y}_i, \dots, y_k, z_1, \dots, z_m\}$ can be covered by at most $M_{k+m-1,r'}$ cones of dimension at most $(k+m-1) 2^{r'} \leq k2^k$ in $\Sigma$. Since the cone generated by 
$\{y_1, \dots, y_k\}$ can be covered by the $k$ cones generated by  $\{y_1, \dots, \wh{y}_i, \dots, y_k, z_1, \dots, z_m\}$, for $1 \leq i \leq k$, we get
$$M_{k,r} \leq k\cdot \max { M_{k+m-1, r'}: 1\leq m \leq k-1,\; 1 \leq r'\leq r-1}.$$
noting that $m<k$ from (\ref{eqn:eff_class_1}). Since $0<r<k$ we get the desired bound
$$M_k = \max { M_{k,r}: 1 \leq r \leq k-1}$$
depending only on $k$.
\end{proof}

We recursively apply Lem.~\ref{lem:improved} starting from our primitive relation $v_1+\dots+v_{n-k+1}=0$. We thus get a sequence of collections of vertices 
$\{y^1_1, \dots, y^1_{t_1+1} \}, \{y^2_1, \dots, y^2_{t_2+1} \}, \dots, \{y^l_1, \dots, y^l_{t_l+1} \}$ such that 
\begin{itemize}
\item $t_1+\dots+t_l=k$
\item $y^i_1+\dots +y_{t_i+1}^i \equiv 0 \on{mod}\on{span}(v_1, \dots, v_{n-k+1}, y^1_1, \dots, y^{1}_{t_1+1}, y^2_1, \dots, y^{i-1}_{t_{i-1}+1})$
\item $\{v_1, \dots, v_{n-k}, y^1_1, \dots, y^{1}_{t_1}, \dots, y^{l}_1, \dots, y^{l}_{t_{l}}\} \in \Sigma$ and is of dimension $n$.
\end{itemize} 
These properties imply that for every 
$w \in G(\Sigma)$ there is a nonnegative combination $\alpha_i$ of $t_i$ of the vertices $y^{i}_1, \dots, y^i_{t_{i}+1}$ such that 
$$w+\alpha_1+\dots +\alpha_l \equiv0 \on{mod}\on{span}(v_1, \dots ,v_{n-k+1})$$

Since $\alpha_1+\dots+\alpha_l$ is a nonnegative combination of $t_1+\dots+t_l=k$ elements of $G(\Sigma)$ and since there are $t_i+1$ ways we can choose $t_i$ vertices from $y^i_1, \dots, y^i_{t_i+1}$, by Prop.~\ref{prop:mk_bound}, there are $s\leq M_k (t_1+1) \dots (t_l+1)$ cones $\sigma_1, \dots, \sigma_s \in \Sigma$ of dimension at most $k 2^k$ such that each $\alpha_1+\dots+\alpha_l$ as above belongs to one of the $\sigma_j, 1 \leq j \leq s$. So for each $w\in G(\Sigma)$, 
there is $1 \leq j \leq s$ and $u_j$ in the cone  $\sigma_j$ such that
$$w+u_j \equiv 0  \on{mod} \on{span}(v_1, \dots ,v_{n-k+1})$$
%Since $v_1, \dots, v_{n-k+1}$ span a $\R^{n-k}$, there is a subcone $\sigma'_j$ of $\sigma_j$ of dimension $k$ such that for any $w$ as above, $w+u'_j\equiv 0 \on{mod} \on{span}(v_1, \dots ,v_{n-k+1})$ for some $u'_i \in \sigma_j'$. 
So if $w\notin\{v_1,\dots,v_{n-k+1}\}$, then $w$ does not form a cone with vertices of $\sigma_j$. By Prop.~\ref{prop:bound_pc} the number of such $w \not\in \{ v_1, \dots, v_{n-k+1} \}$ is bounded by a constant which depends only on $k$, therefore the size of $G(\Sigma)  \setminus \{v_1, \dots, v_{n-k+1}\}$ is bounded by a constant which depends only on $k$.

\begin{proof}[Proof of Thm.~\ref{thm:weak_main}]
Let $X=X_\Sigma$ be a smooth toric Fano variety of dimension $n$ with a minimal component of codegree $k$. By the argument above the number of vertices of the Fano polytope for $X$ is bounded by $n+A_k$ where $A_k$ is a constant depending only on $k$, giving the result.
\end{proof}

It is very unlikely that the bounds we produce in this proof are sharp in any meaningful sense.

\section{Sharp bounds for codegree two}

We can however verify more in the codegree two case.

\begin{thm}\label{thm:codegree2}
If $X$ is a smooth toric Fano variety with a minimal component of codegree $2$ then
$$\rho(X)\leq 5.$$
\end{thm}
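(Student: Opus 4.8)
The plan is to translate the hypothesis into a primitive relation and then reduce the whole problem to a two-dimensional picture. By Thm.~\ref{thm:min_cpt}, a minimal component of codegree $2$ corresponds to a primitive collection $\{v_1, \dots, v_{n-1}\}$ with $v_1 + \dots + v_{n-1} = 0$. Since the Picard rank equals the number of vertices of the Fano polytope minus $n$, and the $v_i$ account for $n-1$ of these vertices, we have $\rho(X) = |G(\Sigma)| - n = |E| - 1$, where $E = G(\Sigma) \setminus \{v_1, \dots, v_{n-1}\}$ is the set of ``extra'' vertices. Thus it suffices to prove $|E| \leq 6$. Because $\{v_1, \dots, v_{n-1}\}$ is a primitive collection every proper subset spans a cone, so the $(n-2)$-element subsets generate smooth cones that together tile $W := \on{span}(v_1, \dots, v_{n-1})$ as the fan of a $\pr^{n-2}$. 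I would first record the consequence that no extra vertex can lie in $W$: such a vertex would sit in the relative interior of one of these cones, contradicting that it generates a ray of $\Sigma$.

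Next I would pass to the quotient $\pi \colon N \to N/W \cong \Z^2$, under which every $v_i \mapsto 0$ while, by the previous paragraph, every $w \in E$ has a nonzero ``charge'' $\pi(w)$. The natural two-dimensional object is the smooth complete toric surface $Z = V(\langle v_1, \dots, v_{n-2}\rangle)$, whose fan lives in $N/W$ and whose rays are the images $\pi(w)$ of those $w \in E$ forming a cone with $v_1, \dots, v_{n-2}$ (exactly as in the quotient construction of Lem.~\ref{lem:improved}). The crucial claim is that $Z$ is itself a \emph{del Pezzo} surface, i.e.\ a smooth toric Fano surface; since these have at most $6$ rays ($\rho(Z) \leq 4$, attained by the blow-up of $\pr^2$ at its three torus-fixed points), this would pin down the number of admissible charges. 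Geometrically this reflects that the primitive relation $v_1 + \dots + v_{n-1} = 0$ organises $X$ as a (possibly degenerate) $\pr^{n-2}$-fibration over $Z$, and the base of a toric Fano fibration is Fano; I would establish the claim combinatorially by descending the Fano condition on the polytope to its image under $\pi$, using Reid's Thm.~\ref{thm:reid} applied to the minimal-component relation to produce the required cones over each charge.

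It then remains to bound the number of extra vertices lying over each charge. Here I would argue that distinct vertices with a common nonzero charge are obstructed by the Fano condition: if $w, w' \in E$ have $\pi(w) = \pi(w')$ then $w - w' \in W$, so after adding a suitable multiple of $v_1 + \dots + v_{n-1} = 0$ we may write $w = w' + \sum_i d_i v_i$ with all $d_i \geq 0$; the associated relation has anticanonical degree $-\sum_i d_i \leq 0$, so were $\{w'\} \cup \{v_i : d_i > 0\}$ to span a cone this would violate the ampleness of $-K_X$ unless $w = w'$. Showing that this forces at most one extra vertex per charge -- and that every charge is in fact a ray of $Z$ -- would give $|E| \leq 6$ and hence $\rho(X) \leq 5$, in agreement with the sharp example of a $\pr^{n-2}$-bundle over the del Pezzo surface of Picard rank $4$.

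The main obstacle I anticipate is precisely the failure of $X$ to be an honest projective bundle: the $\pr^{n-2}$-fibration coming from the minimal component may degenerate, so that a charge carries several extra vertices, or an extra vertex fails to form a cone with $v_1, \dots, v_{n-2}$ and hence does not directly appear as a ray of $Z$. Controlling these degenerate configurations -- equivalently, proving the del Pezzo property of $Z$ and the multiplicity-one statement without assuming a bundle structure -- is the combinatorial heart of the argument, and is where a careful case analysis of the primitive relations through the $v_i$ (in the spirit of Prop.~\ref{prop:bound_pc}) together with the Fano inequality will be needed.
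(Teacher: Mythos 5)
Your setup is correct and matches the paper's starting point: the codegree-$2$ component gives a primitive relation $v_1+\dots+v_{n-1}=0$, the counting $\rho(X)=|E|-1$ with $E=G(\Sigma)\setminus\{v_1,\dots,v_{n-1}\}$ is right, no extra vertex can lie in $W=\on{span}(v_1,\dots,v_{n-1})$, and the paper likewise works with the projection $\pi\colon\R^n\to\R^n/W\cong\R^2$. But from there your plan rests on three claims that are not proved and are precisely where the difficulty lives. First, the claim that $Z=V(\la v_1,\dots,v_{n-2}\ra)$ is a del Pezzo surface is not established and is at least dubious: torus-invariant subvarieties of smooth toric Fanos need not be Fano (e.g.\ the Hirzebruch surface $\mathbb{F}_2=\pr(\mathcal{O}\oplus\mathcal{O}(2))$ over $\pr^1$ occurs as a toric divisor in the smooth toric Fano $3$-fold $\pr(\mathcal{O}_{\pr^2}\oplus\mathcal{O}_{\pr^2}(2))$), so "descending the Fano condition" requires a genuine argument specific to this situation, which you do not supply. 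Second, your multiplicity-one-per-charge argument needs $\{w'\}\cup\{v_i:d_i>0\}$ to span a cone of $\Sigma$ before you can invoke the effectivity criterion of \cite[Lem.~1.4]{cas_con_03}; the set $\{v_i:d_i>0\}$ is a proper subset of $S$ and so spans a cone, but there is no reason $w'$ forms a cone with it, and you do not close this. Third, extra vertices that do not form a cone with $v_1,\dots,v_{n-2}$ simply do not appear as rays of $Z$, so even granting the del Pezzo claim you have no bound on them. You explicitly defer all three issues as "the combinatorial heart of the argument" -- but that heart is the entire content of the theorem.

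For comparison, the paper does not attempt to realise $X$ as a fibration over a del Pezzo surface. Instead it works directly with the images $\pi(w)$ for all $w\in G(\Sigma)\setminus S$ in $\R^2$ and observes that any three such vertices whose images surround the origin and which pairwise span cones must form a primitive collection whose relation takes one of three shapes ($x+y+z=x_i+x_j$, $x+y+z=v$, or $x+y+z=0$); it first disposes of the case where some $\{x,-x\}$ is a primitive collection (Lem.~\ref{lem:pr2}) and then runs an extended case analysis on these three shapes, repeatedly combining relations and applying Thm.~\ref{thm:reid} to manufacture effective classes of non-positive anticanonical degree. Your proposal as written is a strategy outline with the decisive steps missing, not a proof.
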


This bound is sharp in all dimensions, achieved by $\pr^{n-2}\times\text{dP}_6$. The statement can be verified by a direct check in dimensions $4$ and $5$; we used the Graded Ring Database \cite{grdb} and computations of Reynolds \cite{rey_fan_22} that list all primitive relations for toric Fano $4$- and $5$-folds. and so we will focus on the case $\on{dim}(X)\geq6$. The dimension $4$ case also follows from \cite[\S5]{cfh_min_14}.

To fix notation let $x_1+\dots+x_n=0$ be a primitive relation, let $S = \{x_1, \dots, x_{n-1}\}$, let $\Gamma=\on{span}_\R(S)$, and consider the projection  $\pi\colon\R^n \to\R^n/\Gamma\cong\R^{2}$.
Suppose $x,y,z \in G(\Sigma)\setminus S$ are such that $0$ is in the convex hall of $\pi(x), \pi(y), \pi(z)$; equivalently $\pi(x) \in \langle -\pi(y),-\pi(z)\rangle$. Then $\langle x,y,z \rangle \not\in \Sigma$ since a non-trivial non-negative combination of $\{x,y,z\}$ is equal to a non-negative combination of the $x_i$. So if $\langle x, y \rangle, \langle x, z\rangle, \langle y,z \rangle \in \Sigma$, then $\{x,y,z\}$ is a primitive collection. We consider the primitive relation corresponding to 
$x,y,z$ in this case. If $x+y+z=u+v$ then by Theorem \ref{thm:reid} $\langle u,x,y\rangle , \langle u,y,z\rangle , \langle u,x,z\rangle \in \Sigma$. Since $0$ is in the convex hall of at least one of the sets $\{\pi(u),\pi(x),\pi(y)\}, \{\pi(u),\pi(y),\pi(z)\}, \{\pi(u),\pi(x),\pi(z)\}$, 
we conclude $u \in S$. Similarly $v\in S$. We thus have three possibilities. 

\begin{itemize}
\item[{\bf{(a)}}] $x+y+z=x_i+x_j$ for some (possibly equal) $i,j$ 
\item[{\bf{(b)}}] $x+y+z=v$ for some $v\in G(\Sigma)$
\item[{\bf{(c)}}] $x+y+z=0$.
\end{itemize}

We will use the division into these three cases to prove Thm.~\ref{thm:codegree2}.

\begin{lemma} \label{lem:pr2}
Suppose $X=X_\Sigma$ is a smooth toric Fano variety of dimension $n\geq 6$ and $x_1 +\dots+x_{n-1}=0$ is a primitive relation in $\Sigma$. If there is  also a primitive collection in $\Sigma$ of the form $\{x,-x\}$ then $\rho(X)\leq 5$.
\end{lemma}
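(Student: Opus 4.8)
The plan is to analyze the geometry in the projection $\pi\colon \R^n\to\R^n/\Gamma\cong\R^2$, where $\Gamma=\on{span}_\R(x_1,\dots,x_{n-1})$, using the hypothesis that we have both a degree-$(n-1)$ primitive relation $x_1+\dots+x_{n-1}=0$ and a codegree-$2$ minimal component, i.e.\ a primitive relation $x+y+z=0$ with $\{x,y,z\}$ a primitive collection; combined with the extra hypothesis that $\{x,-x\}$ is a primitive collection for some $x$. Since $\rho(X)=|G(\Sigma)|-n$, the goal $\rho(X)\leq 5$ is equivalent to showing $|G(\Sigma)|\leq n+5$, so I must bound the number of ray generators outside $S=\{x_1,\dots,x_{n-1}\}$. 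The key organizing principle is that $\{x,-x\}$ being a primitive collection produces a primitive relation $x+(-x)=\sum a_t w_t$ of anticanonical degree $\sum a_t$, and since the class $x+(-x)-\sum a_t w_t$ is effective on a Fano variety, degree considerations force $\sum a_t<2$, so in fact $x+(-x)=0$; this is the degenerate primitive relation that drives everything.

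\emph{Key steps.} First I would record that the existence of the primitive collection $\{x,-x\}$ means $\la x,-x\ra\notin\Sigma$ while $\la x\ra,\la -x\ra\in\Sigma$, and that the associated primitive relation is literally $x+(-x)=0$, an extremal class of anticanonical degree $2$. By Reid's theorem (Thm.~\ref{thm:reid}) applied to this extremal relation, for every $w\in G(\Sigma)$ with $\la w\ra$ sharing a cone with $\{x\}$ or $\{-x\}$ I obtain new cones, and crucially every ray generator $w$ must form a cone with at least one of $x,-x$. Next I would examine the projection: $\pi(x)$ and $\pi(-x)=-\pi(x)$ span a line $\ell$ in $\R^2$. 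Every other $w\in G(\Sigma)\setminus S$ projects somewhere in $\R^2$, and I want to use the trichotomy (a)/(b)/(c) established just before the lemma — whenever three rays $a,b,c$ outside $S$ have $0$ in the convex hull of their projections, their sum is forced into one of the three listed forms. The strategy is to show that only finitely many (at most a constant) rays can project off the line $\ell$, and similarly control those on $\ell$, by repeatedly invoking the Fano (effectivity) constraint: too many rays would produce relations with coefficient-sum exceeding the degree bounds, contradicting that $X$ is Fano. Concretely I would partition $G(\Sigma)\setminus S$ by the half-plane or ray of $\R^2$ in which each element projects, and argue that the primitive-collection count from Prop.~\ref{prop:bound_pc} (with $m=1$, giving $C_1=3$) bounds how many rays can fail to cone with any fixed generator, pinning down $|G(\Sigma)\setminus S|$ to at most $6$.

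\emph{Main obstacle.} The hard part will be the careful bookkeeping in $\R^2$: I must show that the rays projecting to each of the finitely many ``directions'' (the line $\ell$ versus the two open half-planes) are individually bounded, and that the exceptional rays near $x,-x$ do not accumulate. The trichotomy cases (a), (b), (c) each need to be ruled out or bounded separately, and case (b), where $x+y+z=v$ for a \emph{new} generator $v$, is the most delicate since it can a priori propagate to produce many generators; I expect to close it by feeding the resulting relation back through the Fano condition together with the degenerate relation $x+(-x)=0$ to force $v\in S$ or a contradiction. A secondary subtlety is handling elements of $S$ that might coincide with projections of $x$ or $-x$; I would treat these boundary overlaps by the same Reid-theorem cone-production argument to confirm they remain controlled. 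Assembling these bounds should yield $|G(\Sigma)|\le n+5$ and hence $\rho(X)\le 5$.
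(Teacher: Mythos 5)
Your overall toolkit is the right one --- the projection $\pi$ to $\R^2$, Reid's theorem, Casagrande's bound $C_1=3$, and the Fano/effectivity constraint on anticanonical degrees --- and these are indeed the ingredients of the paper's proof. But as written the proposal is a strategy outline rather than a proof: the entire difficulty of the lemma lies in the combinatorial case analysis that you defer with phrases like ``I would argue'' and ``I expect to close it,'' and none of that analysis is actually carried out. Two of the specific claims you do make are also shaky. First, you assert that every $w\in G(\Sigma)$ forms a cone with at least one of $x,-x$; this is not justified (if $\{w,x\}$ and $\{w,-x\}$ are both primitive collections one only gets $w+x, w-x\in G(\Sigma)$, which is not an immediate contradiction), and it is in fact the crux of the matter: the paper's proof splits precisely on whether there exists $y\neq -x$ outside $S$ with $\langle x,y\rangle\notin\Sigma$ and $x+y\notin S$ (its Case (1)) or not (its Case (2)), and each branch requires a separate, delicate argument. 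Second, you describe the hypotheses as including ``a primitive relation $x+y+z=0$''; the codegree-$2$ minimal component in this lemma is the relation $x_1+\dots+x_{n-1}=0$ itself (degree $n-1$), and the trichotomy (a)/(b)/(c) for three-element primitive collections whose projections surround the origin is a derived fact from the run-up to the theorem, not a hypothesis you may simply assume for a triple of your choosing.

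What is missing from your sketch is the actual content. In the paper's Case (1) one shows that any $v$ projecting into the half-plane opposite $\pi(y)$ must equal $-y$ or $-z$ (where $z=x+y$), by ruling out, via explicit effective classes of nonpositive anticanonical degree, the possibility that $v$ cones with all of $x,-x,y,z$; one then bounds the remaining generators by deriving contradictions from pairs of primitive relations of the form $x+(-y)+u_i=A$ combined with Reid's theorem applied to cones spanned by the $x_i$ (this is where $n\geq 6$ enters, to guarantee $\langle x_i,x_j,x_k,x_l\rangle\in\Sigma$). Case (2) requires a further split on whether two generators $w_1,w_2$ projecting to opposite half-planes form a cone, each subcase again closed by producing forbidden effective classes. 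Simply partitioning $G(\Sigma)\setminus S$ by half-plane and citing $C_1=3$ does not pin the count to $6$: that bound only limits how many two-element primitive collections a fixed generator lies in, and converting it into a global bound on $|G(\Sigma)\setminus S|$ is exactly the work you have not supplied.
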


\begin{proof}
If there is  $y\in G(\Sigma)$ such that $\langle x, y\rangle \not\in \Sigma$, then 
$\{x,y\}$ form a primitive collection and since $X$ is Fano, we have a primitive relation of the form $x+y=0$ or $x+y=z$ for some $z\in G(\Sigma)$, so if $y \neq -x$, then $x+y \in G(\Sigma)$. 

We consider two different cases: 

\noindent {\bf{(1)}} 
There is $y \in G(\Sigma) \setminus S$ such that $y \neq -x$, $\langle x, y \rangle \not\in \Sigma$, and $z :=x+y \not \in S$, and

\noindent {\bf{(2)}} There is no such $y$ as in case (1),

and treat each case separately. 

\smallskip

\noindent Case {\bf{(1)}}: suppose there is $y$ as above.   
Since $y = (-x)+z$ we have $\langle -x, z\rangle \not\in \Sigma$. Consider the projection $\pi$ as above. Then $\pi(y),\pi(z)$ are in the same half-plane bounded by the span of $\pi(x)$. If 
$v \in G(\Sigma)\setminus (S \cup \{x,-x,y,z\})$ is such that $\pi(v)$ in the opposite half-plane, then we show $v=-y$ or $v=-z$. If $v$ does not make a cone with one  of $x$ or $-x$ then by \cite[Lemma 2.3]{cas_tor_03} we have $v=-y$ or $v=-z$. Similarly, if $v$ does not make a cone with one of $y$ or $z$ then \cite[Lemma 2.3]{cas_tor_03} gives $v=-y$ or $v=-z$. So it is enough to show $v$ cannot make a cone with each one of $x,-x,y,z$. Otherwise since  $\pi(v)$ is in one of the cones $\langle -\pi(x), -\pi(z) \rangle$, $\langle -\pi(z), -\pi(y) \rangle$, or $\langle -\pi(y), -\pi(-x) \rangle$, we get a primitive relation of one of the forms:
\begin{enumerate}
\item[(i)] $v+x+z=A$
\item[(ii)] $v+y+z=A$
\item[(iii)] $v+y+(-x)=A$
\end{enumerate}
where $A$ has degree at most $2$. So we have $A+y=v+2z$ in case (i),  $A+x=v+2z$ in case (ii), and $A+z=v+2y$ in case (iii). In each case this gives an effective class of anticanonical degree $\on{deg}(A)-2 \leq 0$ since 
$\langle v, z \rangle, \langle v, y \rangle \in \Sigma$, and hence we get a contradiction. 

Since there is at least one $v\in G(\Sigma)$ such that $\pi(v)$ is in the opposite open half plane as $\pi(y)$, at least one of $-y$ or $-z$ should be in  $G(\Sigma)$. 
If both $-y, -z \in G(\Sigma)$ then, since $x+(-z)=-y \not\in S$, applying the same argument as above to $-y,-z$ gives that there cannot be $w\in G(\Sigma) \setminus S$ with $\pi(w)$ in the same half-plane as $\pi(y)$ other than $x,-x,y,z$. We conclude $\rho(X) \leq 5$.

Now suppose $-y \in G(\Sigma), -z\not\in G(\Sigma)$. To get the desired bound it is enough to show that there is at most one $u \in G(\Sigma)\setminus (S \cup \{x,-x,y,z, -y\})$. For such $u$ we note that $\pi(u)$ must be in the same
open half-plane as $\pi(y)$. Assume to the contrary there are two such $u_1,u_2\in G(\Sigma)$. Note that in this case $\langle -y, x \rangle, \langle -x,-y\rangle \in \Sigma$. Then $\pi(u_1)$ and $\pi(u_2)$ are in one of the cones $\langle \pi(-x), \pi(y) \rangle, \langle \pi(y), \pi(x) \rangle$
and so we can assume we have two primitive relations of the form either:
\begin{itemize}
    \item[($\alpha$)] $x+(-y)+u_1=A$ and $x+(-y)+u_2=B$
    \item[($\beta$)] $x+(-y)+u_1=A$ and $(-x)+(-y)+u_2=B$
\end{itemize}
Observe that $A$ cannot have degree strictly less than $2$ since $x+u_1=A+y$ and $\langle x, u_1\rangle \in \Sigma$; outlawing cases \textbf{(b)} and \textbf{(c)}. Similarly for $B$. We conclude that $A=x_i+x_j$ and $B=x_k+x_l$ for some $i,j,k,l$.

In case ($\alpha$) we have $x_i+x_j+u_2=x_k+x_l+u_1$ but $\langle x_i,x_j,x_k,x_l\rangle \in \Sigma$ since $n\geq 6$ and so by Thm.~\ref{thm:reid} $\langle x_i,x_j,  u_2 \rangle \in \Sigma$, giving an effective class of anticanonical degree zero and hence a contradiction. Similarly, in case ($\beta$) we have $x_i+x_j+x_k+x_l=u_1+2(-y)+u_2$ but $\langle x_i,x_j,x_k,x_l \rangle \in \Sigma$, which similarly produces a contradiction. This completes the proof of case {\bf (1)}. 

\medskip

\noindent Case {\bf{(2)}}: suppose there is no $y$ as in case {\bf (1)}. We may assume that for every $v\in G(\Sigma)$ there is at most one $u\in G(\Sigma)$ such that $\langle u, v \rangle \not\in \Sigma$ and $u\neq -v$. Indeed, if $u_1, u_2$ are two such vertices then $-u_1, -u_2 \in G(\Sigma)$ -- thus $u_1,u_2 \not\in S$ -- and $v+u_1=-u_2$ so $\langle -u_1, -u_2 \rangle \not\in \Sigma$. Also 
$(-u_1)+(-u_2) \not \in S$ as if $-u_1-u_2=x_k$ then $u_1+u_2=\sum_{t\neq k} x_k$, which would give an effective class of non-positive anticanonical degree. Replacing 
$\{x,-x, y\}$ with $\{-u_2,u_2, -u_1\}$ puts us back in the situation of case (1) and so we are done. If such a $u$ exists for $v$ we denote it by $u=v'$ to indicate its relationship to $v$. Note that if $-v, v' \in G(\Sigma)$, then we can assume at least one of $v'$ and $v+v'$ is in $S$ since otherwise considering $\{v,-v,v'\}$ will return us to the situation of case {\bf (1)}. 

Pick $w_1,w_2\in G(\Sigma) \setminus (S \cup \{x,-x\})$ such that $\pi(w_1),\pi(w_2)$ are strictly in opposite half planes with respect to 
$\{x,-x\}$. We note that $w_1$ and $w_2$ each make cones with each of $x$ and $-x$. For instance, if $\langle w_1, x\rangle \not\in \Sigma$ then $w_1+x \in S$ and so $\pi(w_1)=\pi(-x)$, contradicting our assumption that $\pi(w_1)$ and $\pi(w_2)$ are strictly in opposite half planes. Note also that there is at most one vertex $x_\star \neq \pm x$ with $\pi(x_\star)$ on the line spanned by $\pi(x)$ and $\pi(-x)$, which follows from the proof of \cite[Prop.~3.8]{cfh_min_14}.

We consider two subcases:

{\bf{(2.1)}} $\langle w_1, w_2\rangle \in \Sigma$: Then $\pi(w_1+w_2)\not=0$ and either $\pi(x) \in \langle \pi(w_1),\pi(w_2) \rangle$ or $\pi(-x) \in \langle \pi(w_1),\pi(w_2) \rangle$. 
We then get a primitive relation of the form $-x+w_1+w_2=x_k+x_l$ or of the form $x+w_1+w_2=x_k+x_l$, noting that again cases \textbf{(b)} and \textbf{(c)} are not possible since this would result in an effective class of nonpositive anticanonical degree. We show in this case there cannot be any 
$y \in G(\Sigma)\setminus\{-x\}$ such that $\langle y, x \rangle \not\in \Sigma$. If there is such $y$ then $x+y=x_m$ for some $m$ by our assumption. This gives relations  of the form either $w_1+w_2+y=x_k+x_l+x_m$  or $x_m+w_1+w_2=x_k+x_l+y$. Since $n\geq6$ we have $\la x_k,x_l,x_m \ra \in \Sigma$, and applying Thm.~\ref{thm:reid} to the extremal relation $x+y=x_m$ and the cone $\la x_k,x_l,x_m\ra$ we find $\la x_k,x_l,y\ra \in \Sigma$. In each case we thus locate effective classes of anticanonical degree zero and hence a contradiction. Similarly there  cannot be any $y \in G(\Sigma)\setminus \{x\}$ such that $\langle y, -x\rangle \not\in \Sigma$. Therefore such $x_\star$ as above does not exist in this case.

If both $-w_1, w_1'\in G(\Sigma)\setminus S$ then we claim that $z=w_1+w'_1$ is not in $S$. This will yield a contradiction since, as noted above, because $\la -w_1,w_1'\ra\notin \Sigma$ we must have $z\in S$. If $z=x_m$ for some $m$ then we have either $w_2+x_m+x=x_k+x_l+w_1'$ or $w_2+x_m+(-x)=x_k+x_l+w_1'$. Applying Thm.~\ref{thm:reid} to either of the extremal relations $\pm x+w_1+w_2=x_k+x_l$ and the cone $\la x_k,x_l,x_m\ra$ gives that $\la w_2,x_m,\pm x\ra\in\Sigma$ and so the relations above -- or rather their negatives -- define effective classes of anticanonical degree zero, which is impossible.

Therefore we can assume that at most one of $-w_1,w_1'$ is in $G(\Sigma)\setminus S$ and by a similar argument that at most one of $-w_2,w_2'$ is in $G(\Sigma)\setminus S$. To get the desired bound it is enough to show 
$$G(\Sigma) \subseteq S \cup \{x,-x,w_1,w_1',w_2,w_2',-w_1,-w_2\}$$
If $v\in G(\Sigma) \setminus(S \cup \{x,-x,w_1,w_1',w_2,w_2',-w_1,-w_2\})$ then we have a primitive relation of one of the forms $v+(\pm x) +w_1=x_i+x_j$ or $v+(\pm x) +w_2=x_i+x_j$ for some $i,j$. Indeed, $v$ forms a cone with each of $\pm x$, $w_1$ and $w_2$ by construction and $\pi(v)$ must live in one of the cones $\langle \pi(x), \pi(w_1) \rangle, \langle \pi(w_1), \pi(-x)\rangle, \langle \pi(-x), \pi(w_2) \rangle, \langle \pi(w_2),\pi(x) \rangle$. We then deduce that we are in case \textbf{(a)} similarly to before. Assume that the primitive relation we obtain is $v+x+w_1=x_i+x_j$. Combining this with one of the relations $\pm x+w_1+w_2=x_k+x_l$ gives either $w_2+x_i+x_j=v+x_k+x_l$ or $v+2w_1+w_2=x_i+x_j+x_k+x_l$, but the vertices on both right-hand sides form cones in $\Sigma$ (since $n\geq 6$) and so we obtain a contradiction. Similar arguments work in the other cases.

{\bf{(2.2)}} $\langle w_1,w_2\rangle \not\in \Sigma$: As we can apply the argument from case (1) to any two vertices whose images are in opposite half-planes we may assume that any two such vertices do not form a cone and hence form a primitive collection. Therefore, there are at most two other vertices whose images under $\pi$ are in the same half-plane as $\pi(w_1)$: namely $-w_2$ and $w'_2$. Similarly, if there are two other vertices whose images under $\pi$ are in the same half plane as $\pi(w_2)$ then they must be $-w_1$ and $w'_1$. 

If $w_1 = -w_2$ then either $G(\Sigma) \subseteq S \cup \{x,-x,x_\star,w_1,w_2\}$ and we are done or there is $v\in G(\Sigma)$ such that for some $i\in\{1,2\}$ we have that $\pi(v)$ is in the opposite half-plane as $\pi(w_i)$ and $v\neq -w_i$; that is, $v=w_i'$. Replacing $\{w_1,w_2\}$ with $\{w_i,v\}$ in this case we may assume from the beginning that $w_1\neq -w_2$. If $\{w_1,-w_1,w_2,-w_2\} \subseteq G(\Sigma)$ then since $\pi(-w_1), \pi(-w_2)$ belong to opposite half-planes, 
we have $\langle -w_1, -w_2\rangle \not\in \Sigma$ and so $(-w_1)+(-w_2) \in G(\Sigma) \setminus S$. This follows since $w_1+w_2 \in G(\Sigma)$ and  $-w_1-w_2 \not\in S$. Considering the set $\{w_1,-w_1,w_2\}$ places us back in the situation of case {\bf (1)} and we are done. Since we have seen that $G(\Sigma)\setminus (S\cup\{x,-x,x_*\}) \subseteq \{w_1,-w_1,w_2,-w_2\}$ we find that if at most three of $\{w_1,-w_1,w_2,-w_2\}$ are in $G(\Sigma)$ then $\rho(X) \leq 5$, which is the only remaining possibility.
\end{proof}

\begin{proof}[Proof of Thm.~\ref{thm:codegree2} for $n\geq 6$]

Let $\pi$ denote the projection map as before. We first prove the statement when there are  $x,y,z \in G(\Sigma)\setminus S$ such that $\langle x,y\rangle , \langle x,z\rangle, \langle y,z\rangle \in \Sigma$ and $0$ is in the convex hall of $\pi(x), \pi(y), \pi(z)$; equivalently if $\pi(z) \in \langle -\pi(x),-\pi(y) \rangle$. In this case $\{x,y,z\}$ form a primitive collection. 

By Lemma \ref{lem:pr2} we may assume that for every $v \in G(\Sigma)$ there is at most one $w\in G(\Sigma)$ with $\langle w, v \rangle \not\in \Sigma$ and if such $w$ exists, then $w\neq -v$. Indeed, if there are $w_1\neq w_2$ 
such that $\langle w_1, v\rangle, \langle w_2,v\rangle \not\in \Sigma$ and $w_1,w_2\neq -v$, then $-w_1,-w_2\in G(\Sigma)$, so $\rho(X) \leq 5$ by Lemma \ref{lem:pr2}. For $v \in G(\Sigma)$, if there is a $w\neq -v$ which does not make a cone with $v$ then we denote it by $w=v'$. We consider the cases \textbf{(a)}, \textbf{(b)} and \textbf{(c)} separately for the primitive collection $\{x,y,z\}$.

\medskip

\noindent {\bf{(a)}} Let $x+y+z=x_i+x_j$. If $G(\Sigma) \subseteq S\cup\{x,y,z,x',y',z'\}$ we are done. Assume there is $u \in G(\Sigma) \setminus (S\cup \{x,y,z,x',y',z'\})$. Then $\pi(u)$ belongs to one of the cones
$$\langle -\pi(x), -\pi(y) \rangle, \langle -\pi(y), -\pi(z) \rangle, \langle -\pi(z), -\pi(x) \rangle.$$
Without loss of generality we assume $\pi(u)\in \langle -\pi(x), -\pi(y) \rangle$ so $\{u,x,y\}$ form a primitive collection. Let $$u+x+y=B$$ be the corresponding primitive relation 
where $B=x_k+x_l$ or $B=v\in G(\Sigma)$ or $B=0$.

$B$ cannot be of the form $x_k+x_l$ as otherwise we find $z+x_k+x_l=u+x_i+x_j$, which gives an effective class of anticanonical degree zero. Indeed, $\langle z, x_k, x_l \rangle \in \Sigma$ by applying Thm.~\ref{thm:reid} to the relation $x+y+z=x_i+x_j$ and the cone $\la x_i,x_j,x_k,x_l\ra$ (noting that $n \geq 6$ is required here). We next consider the two cases $B=v$ and $B=0$.

\smallskip

{\bf{(a.1)}} Suppose $B=v$. If there is $w\in G(\Sigma)\setminus (S\cup \{x,y,z,x',y',u,u'\})$ then we get a primitive relation of the form 
$w+x+y=C$ or $w+u+x=C$ or $w+u+y=C$ where $C=0$ or $C=\alpha\in G(\Sigma)$. Note that $C=x_k+x_l$ is not possible for the same reason as above. We get $v+w=C+u$ or $v+w=C+y$ or $v+w=C+x$ and so $w=v'$. 
Therefore $G(\Sigma) \subseteq S\cup\{x,y,z,x',y',u,u',v'\}$. We show 
$x'$ and $y'$ cannot exist in $G(\Sigma)$ and so $\rho(X) \leq 5$. Suppose to the contrary that $x'\in G(\Sigma)$ and $x+x'=\beta\in G(\Sigma)$. We will find a contradiction by showing $\beta \not\in S \cup \{x,y,z,x',y',u,u',v'\}$. From $x+y+z=x_i+x_j$ we get $$\beta+y+z=x'+x_i+x_j.$$
This shows $\beta \not\in S$ since otherwise by Theorem \ref{thm:reid} the left hand side will form a cone which is not possible. Also $\beta\neq y$ or $z$ since if, for example, $\beta=y$ we get $2y+z=x'+x_i+x_j$ which is not possible since the left hand side forms a cone. From the relation $u+x+y=v$ we get $u+\beta+y=v+x'$. If $\beta=u'$ then $\gamma := \beta+u \in G(\Sigma)$, 
so $\gamma+y=v+x'$ which is not possible since $\langle v,x'\rangle \in \Sigma$ as $v\neq x$. Similarly $\beta \neq y'$. Also, $\beta\neq u$ since otherwise $2u+y=v+x'$ which is not possible since $\langle u, y\rangle \in \Sigma$. Finally $\beta\neq v'$ since otherwise, $u+v'+y=v+x'$ but this is a primitive relation so $\langle v',v\rangle$ should be in $\Sigma$, a contradiction.

{\bf{(a.2)}} Suppose $B=0$. Then we have $u=-x-y$. It is enough to consider the case
$$G(\Sigma) \subseteq S \cup \{x,y,z,x',y',z',-x-y,-x-z,-y-z\}.$$
We show that in this case $x',y',z'$, if they exist, are elements of $S$. Suppose to the contrary that for example $z' \in G(\Sigma) \setminus S$ and let $w=z+z'$. We get $w+x+y=x_j+x_j+z'$. Suppose $w=x_k\in S$. Then we have $x_k+x+y=x_i+x_j+z'$ but $\langle x_k, x, y \rangle \in \Sigma$ from applying Thm.~\ref{thm:reid} to $x+y+z=x_i+x_j$ and the cone $\la x_i,x_j,x_k\ra$, giving an effective class of anticanonical degree zero. Hence $w\notin S$. Also, $w\neq y$ since otherwise $x+2y=x_i+x_j+z'$ but $\langle x, y \rangle \in \Sigma$, giving a contradiction in the same way. Similarly $w \neq x$. We have $w\neq -y-z$ as otherwise $2z+z'+y=0$ which is not possible since $\{z',y\}$ is a part of a basis for the lattice. Similarly $w\neq -x-z$. Finally $w\neq -x-y$ as otherwise $x_i+x_j+z'=0$, giving $z' \in S$. Thus we must have $w=x'$ or $w=y'$. Assume $w=x'$. Applying the same argument as above to $x'$ we get $x+x'=y'$ or $x+x'=z'$. If 
$x+x'=z'$, then $x+z=0$, a contradiction. If $x+x'=y'$ then applying the same argument to $y'$ we get $y'+y=z'$ and hence $x+y+z=0$, a contradiction.

\medskip

\noindent {\bf{(b)}} Let $x+y+z=v$. By the proof for \textbf{(a)} we may assume for any $u_1,u_2,u_3\in G(\Sigma)\setminus S$ such that $\pi(u_1) \in \langle -\pi(u_2), -\pi(u_3) \rangle$ and 
$\{u_1,u_2,u_3\}$ is a primitive collection that we have $u_1+u_2+u_3=0$ or $u_1+u_2+u_3 \in G(\Sigma)$. 
%The same argument as for \textbf{(a)} also shows that if $u_1+u_2+u_3 = w$ %then we can assume $w \in S$ as otherwise 
%$G(\Sigma) \subseteq S \cup \{u_1,u_2,u_3,u'_1,u'_2,u'_3\}$. 
 
If $G(\Sigma)\subseteq S \cup \{x,y,z,x',y',z'\}$ then $\rho(X) \leq 5$, so assume that there exists $u\in G(\Sigma)\setminus (S \cup \{x,y,z,x',y',z'\})$ and without loss of generality assume $\pi(u) \in \langle -\pi(x), -\pi(y) \rangle$. Then $\{u,x,y\}$ is a primitive collection with primitive relation $u+x+y=w\in G(\Sigma)$ or $u+x+y=0$. Therefore $u+v=w+z$ or $u+v=z$, so $u=v'$. So 
$G(\Sigma) \subset S \cup \{x,y,z,x',y',z',v'\}$. Note that if $v\not\in S\cup\{x',y',z'\}$, then letting $u=v$ in the above argument we get 
$2v=w+z$ or $2v=z$ which is not possible. So $v\in\{x',y',z'\}$ or 
$v\in S$. If $v\in\{x',y',z'\}$, then $v'\in\{x,y,z\}$, so $G(\Sigma)\subset S \cup \{x,y,z,x',y',z'\}$ and we are done. So we may assume $v=x_m\in S$. A similar argument shows that we may assume for any $u_1,u_2,u_3\in G(\Sigma)\setminus S$ such that $\pi(u_1) \in \langle -\pi(u_2), -\pi(u_3) \rangle$ and 
$\{u_1,u_2,u_3\}$ is a primitive collection we have $u_1+u_2+u_3=0$ or $u_1+u_2+u_3 \in S$. 

It is thus enough to show it is not possible that $x,y,z,x',y',z',x_m'$ are all distinct vertices in $G(\Sigma)\setminus S$. Suppose otherwise. We can assume $-\pi(x'_m) \in \langle \pi(x),\pi(y)\rangle$ and so we have a primitive relation of the form $x'_m+x+y=x_k$ or $x'_m+x+y=0$. The first option gives $x'_m+x_m=z+x_k$, yielding $z'=x_k$ and thus a contradiction as $z'\notin S$. As a result we must have $x'_m+x+y=0$ and $x_m + x_m' = z$. We show 
$\pi(z') \not\in  \langle -\pi(x), -\pi(y) \rangle$. If not, we obtain a primitive relation $z'+x+y=0$ or $z'+x+y=x_j$, giving $z'+x_m=z$ or $z'+x_m=z+x_j$. Note that $\langle z',x_m\rangle \not\in \Sigma$ and so $z=x_m$, a contradiction. Hence $\pi(z') \not\in  \langle -\pi(x), -\pi(y) \rangle$ and therefore $\pi(z') \in  \langle -\pi(x), -\pi(z) \rangle = \langle -\pi(x), -\pi(x'_m) \rangle$ or $\pi(z') \in  \langle -\pi(y), -\pi(z) \rangle = \langle -\pi(y), -\pi(x'_m) \rangle$. Without loss of generality we may assume $\pi(z') 
 \in  \langle -\pi(y), -\pi(x_m') \rangle$. Then we 
have a  primitive relation $z'+x_m'+y=0$ or $z'+x_m'+y=x_l$. The first gives $z'=x$, a contradiction. The second gives $z'=x+x_l$ which is also a contradiction since $x'\not\in S$. Hence $\rho(X) \leq 5$.

\medskip

\noindent {\bf{(c)}} Let $x+y+z=0$. Suppose that there exists some $v \in G(\Sigma)\setminus(S\cup\{x,y,z,x',y',z'\})$. Then we can assume without loss of generality that 
$\pi(v) \in \langle -\pi(y), -\pi(z) \rangle$. So $\{v,y,z\}$ is a primitive collection. If $v+y+z \neq 0$ then by cases \textbf{(a)} and \textbf{(b)} we have $\rho(X)\leq 5$. If $v+y+z=0$ then $v=x$, a contradiction. Hence $\rho(X) \leq 5$. 

\medskip

To summarise, we have reduced to the case where if $\la x,y\ra\in \Sigma$ and $\pi(z) \in \langle -\pi(x),-\pi(y) \rangle$ then $z$ does not make a cone with one of $x$ or $y$. 
By Lemma \ref{lem:pr2} we can assume for any $x \in G(\Sigma), -x \not\in G(\Sigma)$.  We select $x,y \in G(\Sigma)$ with $\la x,y\ra \in \Sigma$ and such that the cone generated by $\pi(x)$ and $\pi(y)$ is maximal among cones in $\R^2$ coming from such pairs. If there is $z\in G(\Sigma)$ such that $\pi(z)$ is outside the cone  $\langle \pi(x), \pi(y) \rangle$, then $z$ does not form a cone with either $x$ or $y$: if  $\pi(z)$ is in $\langle \pi(x), -\pi(y) \rangle$ or $\la -\pi(x),\pi(y)\ra$ then this follows from maximality of the cone generated by $\pi(x)$ and $\pi(y)$. Thus $z=y'$ or $z=x'$. 

Let $v$ be such that $\pi(v)$ is in the open half plane determined by $\pi(x)$ not containing $\pi(y)$. If $v=x'$, then $x+x'=y'$ since $\pi(x+x')$ is non-zero and is outside $\langle \pi(y),\pi(x) \rangle.$ So the cone $\la\pi(x),\pi(y)\ra$ is covered by the the cones $\la -\pi(x), -\pi(y') \ra$, $\la -\pi(y'), -\pi(x') \ra$, and $\la -\pi(x'), -\pi(y) \ra$. So we see that $G(\Sigma)\subseteq S\cup\{x,y,x',y'\}$, which concludes the proof. A similar argument shows the result if $v=y'$ and $\pi(v)$ and 
$\pi(x)$ are in opposite half planes determined by $\pi(y)$. If $v=y'$ and $\pi(v)$ and $\pi(x)$ are in the same half plane determined by $\pi(y)$, then we choose $w$ such that $\pi(w)$ is in the opposite open half plane and use the same argument to conclude the proof. 
\end{proof}

\begin{cor} \label{cor:codegree2}
Conjecture \ref{conj:strong} and the conjecture of Chen--Fu--Hwang hold for all smooth toric Fano varieties with a minimal component of codegree $2$.
\end{cor}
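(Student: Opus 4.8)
The plan is to deduce both assertions directly from Thm.~\ref{thm:codegree2}, which already supplies the uniform bound $\rho(X)\leq 5$ for every smooth toric Fano variety $X$ carrying a minimal component of codegree $2$. The remaining task is purely to confirm that this bound is compatible with the two numerical inequalities in question, which amounts to elementary arithmetic once the relationship between codegree and degree is recorded.

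First I would note that a minimal component $\mathcal{C}$ of codegree $2$ on an $n$-dimensional $X$ has degree $k=\on{deg}(\mathcal{C})=n-1$, since $\on{codeg}(\mathcal{C})=n+1-\on{deg}(\mathcal{C})=2$; in particular such a component can only occur for $n\geq3$. For Conj.~\ref{conj:strong} the predicted bound is $2\on{codeg}(\mathcal{C})+2=2n-2k+4=6$, so the inequality $\rho(X)\leq 6$ is immediate from $\rho(X)\leq5$.

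For the conjecture of Chen--Fu--Hwang, i.e.~inequality (\ref{eqn:conj}), I would substitute $k=n-1$, turning the claim into $\rho(X)\cdot(n-2)\leq\tfrac{n(n+1)}{2}$. Since $\rho(X)\leq5$ it suffices to check $5(n-2)\leq\tfrac{n(n+1)}{2}$, equivalently $10(n-2)\leq n(n+1)$, which rearranges to $(n-4)(n-5)\geq0$. This product is nonnegative for every integer $n$, as its only sign change occurs on the open interval $(4,5)$, which contains no integers. The borderline dimensions $n=4,5$ yield equality $5(n-2)=\tfrac{n(n+1)}{2}$, and all other admissible dimensions satisfy the inequality strictly.

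The substantive content has already been handled in the proof of Thm.~\ref{thm:codegree2}, so the corollary presents no genuine obstacle beyond bookkeeping. The single point deserving attention is the low-dimensional regime, where (\ref{eqn:conj}) is tightest: the equalities at $n=4$ and $n=5$ show that the constant $5$ of Thm.~\ref{thm:codegree2} is exactly the input required, so while no sharper bound is needed, none can be spared either.
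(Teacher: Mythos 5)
Your proposal is correct and matches the paper's own (very brief) justification: both deduce the corollary from the bound $\rho(X)\leq 5$ of Thm.~\ref{thm:codegree2} together with elementary arithmetic, the paper by tabulating $\lfloor\frac{n(n+1)}{2(n-2)}\rfloor$ for small $n$ and noting it grows linearly, you by the equivalent factorization $(n-4)(n-5)\geq 0$. Your observation that equality holds at $n=4,5$, so that the constant $5$ cannot be improved upon as an input, is a nice touch consistent with the paper's remark that the bound is sharp.
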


Conjecture \ref{conj:strong} predicts that $\rho(X)\leq 6$ for all such $X$, and the conjecture of Chen--Fu--Hwang predicts $\rho(X)\leq\lfloor\frac{n(n+1)}{2(n-2)}\rfloor$, which is $5$ for $n=4,5,6,7$; $6$ for $n=3,8,9,10$; and in general increases linearly with $n$.

\bibliographystyle{acm}
\bibliography{bw}

\end{document}